\newtheorem{theorem}{Theorem}[section]
\newtheorem{lemma}[theorem]{Lemma}
\newtheorem{corollary}[theorem]{Corollary}
\newtheorem{conjecture}[theorem]{Conjecture}
\newtheorem{definition}[theorem]{Definition}
\newtheorem{remark}[theorem]{Remark}
\numberwithin{equation}{section}
\DeclareMathOperator{\lcm}{lcm}
\DeclareMathOperator{\ord}{ord}
\title[A primality test for $Kp^n+1$ numbers]{A primality test for $Kp^n+1$ numbers and a generalization of Safe Primes and Sophie Germain Primes}
\date{}
\author{A. Ramzy}
\address{Department of Mathematics, Faculty of Education, Azhar University, Cairo, Egypt}
\email{hosam7101996@gmail.com}
\subjclass{Primary 11Y11, 11N80, 11N05}
\keywords{Primality test, Safe prime, Sophie Germain prime}
\begin{document}

\begin{abstract}
In this paper, we provide a generalization of Proth's theorem for integers of the form $Kp^n+1$. In particular, a primality test that requires only one modular exponentiation similar to that of Fermat's test without the computation of any GCD's. We also provide two tests to increase the chances of proving the primality of $Kp^n+1$ numbers (if they are primes indeed). As a corollaries of these tests we provide three families of integers $N$ whose primality can be certified only by proving that $a^{N-1} \equiv 1 \pmod N$ (Fermat's test). We also generalize Safe Primes and define those generalized numbers as $a$-SafePrimes for being similar to SafePrimes (since $N-1$ for these numbers has large prime factor the same as SafePrimes), we address some questions regarding the distribution of those numbers and provide a conjecture about the distribution of their related numbers $a$-SophieGermainPrimes which seems to be true even if we are dealing with $100$, $1000$, or $10000$ digits primes.
\end{abstract}
\maketitle

\section{INTRODUCTION}

One of the fundamental facts about prime numbers is Fermat's result that if $N$ is prime then for every integer $a$ we have

\begin{equation}
	\label{fer1}
	a^N \equiv a \pmod N.
\end{equation}

This leads to the well-known Fermat's primality test which is a probabilistic test, 
but since the verification of \eqref{fer1} for a given $a$ and $N$ is computationally inexpensive, even for large $N$, there has been a number of results studying additional conditions that could be combined with Fermat's test to conclude primality. For example, in 1878 Proth (see \cite{Rib}) presented the following theorem to determine whether $N$ is prime or not when $N$ is of the form $N=K2^n+1$ (Proth numbers)

\begin{theorem} [Proth, 1878]
	Let $N=K2^n+1$, where $K$ is odd and $K<2^n$. If there exists an integer $a>1$, such that $a^{\frac{N-1}{2}} \equiv -1 \pmod {N}$, then $N$ is prime.
\end{theorem}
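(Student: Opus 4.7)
The plan is to argue via the multiplicative order of $a$ modulo an arbitrary prime factor $p$ of $N$, and then combine the resulting divisibility information with the size hypothesis $K < 2^n$ to rule out any proper factorization. First I would square the given congruence to obtain $a^{N-1} \equiv 1 \pmod{N}$, and reduce everything modulo an arbitrary prime $p \mid N$. Writing $d = \ord_p(a)$, the hypotheses give $d \mid N-1 = K2^n$ while simultaneously $d \nmid (N-1)/2 = K2^{n-1}$, since otherwise $a^{(N-1)/2}$ would be $1$ rather than $-1$ modulo $p$.

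The key structural step is to extract the $2$-adic content of $d$. Since $K$ is odd, writing $d = 2^s m$ with $m$ odd and $m \mid K$, the conditions above force $s = n$. In particular $2^n \mid d$, and since $d \mid p-1$ by Fermat's little theorem, we conclude $p \equiv 1 \pmod{2^n}$, so that $p \geq 2^n + 1$.

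Finally I would close with a size argument. If $N$ were composite it would have a prime factor $p \leq \sqrt{N}$, and by the previous paragraph every prime factor satisfies $p \geq 2^n+1$, so
\[
    N \;\geq\; p^2 \;\geq\; (2^n+1)^2 \;=\; 2^{2n} + 2^{n+1} + 1.
\]
On the other hand, the hypothesis $K < 2^n$ yields $N = K2^n + 1 < 2^{2n} + 1$, contradicting the previous inequality. Hence $N$ has no prime factor below $\sqrt{N}$ and must itself be prime.

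I do not anticipate a serious obstacle; the only subtle point is the role of the hypothesis that $a^{(N-1)/2} \equiv -1$ rather than merely $a^{N-1} \equiv 1$. The stronger hypothesis is exactly what forces the full power $2^n$ (not a proper divisor) to appear in the order $d$, and this is what makes the subsequent size argument tight enough to exclude composite $N$ under the assumption $K < 2^n$.
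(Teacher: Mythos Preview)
Your proof is correct. Note, however, that the paper does not give its own proof of Proth's theorem: the result is quoted in the introduction as a classical fact, and what the paper actually proves is the generalization Theorem~\ref{main} (for $N=Kp^n+1$ with arbitrary prime $p$).

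It is still worth comparing your approach to the paper's proof of Theorem~\ref{main}, since Proth is the case $p=2$. You work prime-by-prime: because $a^{(N-1)/2}\equiv -1\pmod N$ reduces to $-1$ modulo \emph{every} prime divisor $q$ of $N$, you get $2^n\mid q-1$ for all such $q$, and then a crude size estimate ($N<2^{2n}+1<(2^n+1)^2$) rules out compositeness. The paper instead argues that $p^n\mid\ord_N(a)$, invokes Lemma~\ref{lem2} to extract a \emph{single} prime $q\mid N$ with $p^n\mid q-1$, and then uses the sharper divisibility Lemma~\ref{lem1} to force $q=N$. Your route is the classical one and is slightly more elementary for Proth specifically; the paper's route is what is needed in the general setting, since under the weaker hypothesis $L\ne 1$ (as opposed to $L=-1$) one cannot conclude $L\not\equiv 1\pmod q$ for every prime $q\mid N$, and so the ``all prime factors'' argument is unavailable there.
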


In 1914 an important and useful result was proved by Pocklington, which only requires a partial factorization of $(N-1)$ 

\begin{theorem}[Pocklington, 1914]\label{Pock1}
	Let $N-1=q^nR$ where $q$ is a prime, $n \ge 1$, and $q \nmid R$. Assume that there exists an integer $a>1$ such that:
	\begin{enumerate}
		\item[(i)] $ a^{N-1} \equiv 1 \pmod N$, and
		\item[(ii)] gcd$(a^{\frac{N-1}{q}}-1, N)=1$.
	\end{enumerate}
	Then each prime factor of $N$ is of the form $mq^n+1$, with $m \ge 1$.
\end{theorem}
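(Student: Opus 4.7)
The plan is to pass from information about $N$ to information about an arbitrary prime divisor $p$ of $N$, by studying the multiplicative order of $a$ modulo $p$.

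First, I would fix any prime $p \mid N$ and let $d = \mathrm{ord}_p(a)$. Condition (i) gives $a^{N-1}\equiv 1 \pmod N$, and since $p \mid N$ this descends to $a^{N-1}\equiv 1 \pmod p$, so $d \mid N-1 = q^n R$. Next I would extract information from condition (ii): since $\gcd(a^{(N-1)/q}-1,N)=1$ and $p \mid N$, we cannot have $p \mid a^{(N-1)/q}-1$, so $a^{(N-1)/q}\not\equiv 1 \pmod p$, which means $d \nmid (N-1)/q = q^{n-1}R$.

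The key step is combining these two divisibility facts to force $q^n \mid d$. Writing $d = q^s t$ with $\gcd(t,q)=1$, the divisibility $d \mid q^n R$ together with $q \nmid R$ gives $s \le n$ and $t \mid R$. The non-divisibility $d \nmid q^{n-1}R$ rules out $s \le n-1$, so $s = n$, i.e.\ $q^n \mid d$. This is really the only subtle point, and it reduces to a short $q$-adic valuation argument that I would spell out carefully.

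Finally I would invoke Fermat's little theorem: since $p \nmid a$ (otherwise $a^{(N-1)/q}\equiv 0 \not\equiv 1 \pmod p$ would still hold, but more cleanly, $a^{N-1}\equiv 1 \pmod p$ forces $\gcd(a,p)=1$), we have $d \mid p-1$. Combined with $q^n \mid d$, this yields $q^n \mid p-1$, so $p = mq^n+1$ for some integer $m$, and $m \ge 1$ because $p \ge q^n + 1 > 1$. Since $p$ was an arbitrary prime divisor of $N$, the conclusion follows. I do not expect any serious obstacle beyond the valuation step identified above; the argument is essentially a careful bookkeeping of orders modulo $p$.
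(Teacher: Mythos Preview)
Your proof is correct and is the standard order-based argument for Pocklington's theorem. The paper itself does not give a proof of this statement: Theorem~\ref{Pock1} is quoted in the introduction as a classical 1914 result, without proof, to set the historical context for the authors' own theorems. So there is no paper proof to compare against; your write-up is essentially the textbook argument, and nothing in it is problematic.
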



As a result of \ref{Pock1} he gave the first generalization of Proth's theorem suitable for numbers of the form $N=Kp^n+1$, which are called \emph{Generalized Proth numbers} (see \cite{Rib}): 
\begin{theorem}[Pocklington, 1914]\label{Pock3} Let $N=Kp^n+1$ with $K<p^n$, and $p$ is prime. If there exists an integer $a>1$ such that:
	\begin{enumerate}
		\item[(i)] $a^{N-1} \equiv 1 \pmod N$, and 
		\item[(ii)] gcd$(a^{\frac{N-1}{p}}-1, N)=1$,   
	\end{enumerate}
	then $N$ is prime.
\end{theorem}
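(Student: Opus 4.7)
The plan is to derive Theorem \ref{Pock3} as a direct corollary of Theorem \ref{Pock1}, using the size constraint $K < p^n$ to upgrade the structural information about prime factors of $N$ into full primality.

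First I would verify that the hypotheses of Theorem \ref{Pock1} are in place. Write $N - 1 = K p^n$ and take $q = p$. If $p \mid K$, then I would simply absorb the extra powers of $p$: write $K = p^s K'$ with $p \nmid K'$, and apply the argument with exponent $n+s$ in place of $n$; the hypothesis $K < p^n$ forces $K' < p^n \le p^{n+s}$, so nothing is lost. Thus I may assume $p \nmid K$, and conditions (i) and (ii) of Theorem \ref{Pock1} are exactly conditions (i) and (ii) of the statement to be proved.

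Next, Theorem \ref{Pock1} yields that every prime divisor $\ell$ of $N$ satisfies $\ell \equiv 1 \pmod{p^n}$, so $\ell \ge p^n + 1$. Now suppose for contradiction that $N$ is composite. Then $N$ has at least two prime factors (counted with multiplicity), each of size at least $p^n + 1$, giving
\begin{equation*}
N \;\ge\; (p^n + 1)^2 \;=\; p^{2n} + 2p^n + 1.
\end{equation*}
On the other hand, the hypothesis $K < p^n$ gives
\begin{equation*}
N \;=\; Kp^n + 1 \;<\; p^{2n} + 1 \;<\; (p^n+1)^2,
\end{equation*}
a contradiction. Hence $N$ must be prime.

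There is no real obstacle here; the only subtlety worth stating carefully is the reduction to the case $p \nmid K$, which preserves both the form of $N$ and the size bound $K < p^n$, after which the proof is a one-line size comparison against the lower bound forced by Theorem \ref{Pock1}.
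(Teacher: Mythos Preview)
The paper does not supply its own proof of Theorem \ref{Pock3}; it is quoted as Pocklington's classical result and the text simply notes that it was obtained ``as a result of \ref{Pock1}.'' Your derivation is correct and is precisely the standard route the paper alludes to: apply Theorem \ref{Pock1} to force every prime factor of $N$ to be at least $p^n+1$, then use $K<p^n$ to rule out two such factors. Your handling of the case $p\mid K$ is also fine.

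If you want to match the paper's internal toolkit more closely, note that for its own Theorem \ref{main} the paper replaces the size inequality $N<(p^n+1)^2$ by Lemma \ref{lem1}: once you know a prime divisor has the form $hp^n+1$, Lemma \ref{lem1} with $A=K$, $P=p^n$, $D=h$ forces $h=K$ and hence $q=N$ directly. Either argument works here and they are essentially equivalent in strength; yours is the textbook Pocklington argument, while Lemma \ref{lem1} is the variant the paper develops for later use.
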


In \cite{GrOMSa} the following simpler generalization was presented by Grau, Oller-Marcen, and Sadornil.

\begin{theorem}[Grau et al.,  2015]\label{GOS}
	Let $N=Kp^n+1$, where $p$ is prime and $K<p^n$. Assume that $a \in \mathbb{Z} $ is a $p$-th power non-residue, then $N$ is a prime if and only if $ \Phi_p(a^{\frac{N-1}{p}}) \equiv 0\pmod N $. 
\end{theorem}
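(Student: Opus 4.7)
The plan is to prove the biconditional by handling the two directions with different tools: a direct argument in $\mathbb{F}_N$ for the forward implication, and Pocklington's theorem (Theorem \ref{Pock1}) for the reverse. Throughout, set $b := a^{(N-1)/p}$ and exploit the identity $(x-1)\Phi_p(x) = x^p - 1$.

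For the ``only if'' direction, I would assume $N$ is prime. Since $p \mid N-1$, Fermat's little theorem gives $b^p = a^{N-1} \equiv 1 \pmod N$, so $b$ is a $p$-th root of unity in the field $\mathbb{F}_N$. The hypothesis that $a$ is a $p$-th power non-residue says precisely that $b \not\equiv 1 \pmod N$, by the Euler-type criterion (the $p$-th powers form the kernel of $x \mapsto x^{(N-1)/p}$ on $\mathbb{F}_N^\times$). Since $p$ is prime, any $p$-th root of unity other than $1$ is a primitive $p$-th root, hence a root of $\Phi_p$, giving $\Phi_p(b) \equiv 0 \pmod N$.

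For the ``if'' direction, the strategy is to verify the two hypotheses of Theorem \ref{Pock1} with $q = p$ and exponent $v_p(N-1) \ge n$. Condition (i), $a^{N-1} \equiv 1 \pmod N$, is immediate by evaluating the identity $(x-1)\Phi_p(x) = x^p - 1$ at $x = b$. For condition (ii), $\gcd(b-1, N) = 1$, the key observation is this: letting $d := \gcd(b-1, N)$, we have $b \equiv 1 \pmod d$, so $\Phi_p(b) \equiv p \pmod d$; combined with $d \mid N \mid \Phi_p(b)$ this forces $d \mid p$, and since $p \mid N-1$ we have $\gcd(p,N) = 1$, so $d = 1$. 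Pocklington then yields that every prime divisor of $N$ is at least $p^n + 1$, whereas the assumption $K < p^n$ gives $N < (p^n+1)^2$, so $N$ admits no prime factor below $\sqrt{N}$ and is therefore itself prime.

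The main subtlety is conceptual rather than technical: the non-residue hypothesis is used only in the forward direction, and the GCD-via-$\Phi_p$ observation in the reverse direction is the crux. It is exactly what makes the test attractive in practice, because checking the single congruence $\Phi_p(b) \equiv 0 \pmod N$ automatically subsumes \emph{both} of Pocklington's conditions without any separate GCD computation. A minor point to watch is that the $p$-adic valuation of $N-1$ may strictly exceed $n$ when $p \mid K$, but in that case Pocklington's conclusion only strengthens (prime factors are $\ge p^{v_p(N-1)}+1 \ge p^n+1$), so the bound $\sqrt{N} < p^n + 1$ still closes the argument.
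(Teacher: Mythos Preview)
Your proof is correct. Note, however, that the paper does not give its own proof of Theorem~\ref{GOS}; it is quoted from \cite{GrOMSa}. The paper's closest in-house analogue is Theorem~\ref{main}, which is the ``if'' half of Theorem~\ref{GOS} rephrased via $L=a^{(N-1)/p}$ and the conditions $L\not\equiv 1$, $L^p\equiv 1\pmod N$ in place of $\Phi_p(L)\equiv 0\pmod N$.

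Comparing your ``if'' direction to the paper's proof of Theorem~\ref{main}: you invoke Pocklington (Theorem~\ref{Pock1}) as a black box, supplying condition (ii) via the neat observation $\Phi_p(b)\equiv\Phi_p(1)=p\pmod{\gcd(b-1,N)}$, and then finish with the size bound $N<(p^n+1)^2$. The paper instead unpacks Pocklington by hand: Lemma~\ref{lem2} locates a prime $q\mid N$ with $p^n\mid q-1$, and then Lemma~\ref{lem1} (an elementary divisibility fact) replaces your $\sqrt{N}$ argument to force $q=N$. Your route is shorter because Pocklington is already stated in the paper; the paper's route is more self-contained and avoids the square-root bound entirely. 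Functionally the two are equivalent.

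Your ``only if'' direction (the Euler-criterion argument showing $b\not\equiv 1$ and $b^p\equiv 1$ when $N$ is prime and $a$ is a $p$-th power non-residue) is correct and standard; the paper does not treat this direction at all, since Theorem~\ref{main} is stated only as a sufficient condition.
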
 

Then they gave a more useful test which increases the chance of proving the primality of $N$ (if $N$ is prime indeed). Note that we replaced $(J)$ in the original theorem by $(n-j)$ in order to compare it to theorem \ref{thm2} in the following section, namely, the conditions $n-1 \geq j \geq 0$ and $2(n-j) > \log_p(K)+n$ in the following theorem are equivalent to the conditions $1 \leq J \leq n$ and $2J > \log_p(K)+n$ in the original theorem in \cite{GrOMSa}. 

\begin{theorem}[Grau et al.,  2015]\label{thmjose}
	Let $N=Kp^n+1$, where $p$ is prime. If there exists $n-1 \geq j \geq 0$ such that:
	\begin{itemize}
		\item[(i)]$\Phi_p(a^{Kp^{n-j-1}}) \equiv 0\pmod N $
		\item[(ii)]$2(n-j) > \log_p(K)+n$.
	\end{itemize} 
	Then $N$ is prime.
\end{theorem}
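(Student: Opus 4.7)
The plan is to imitate the proof of Theorem \ref{GOS} and then use condition (ii) as a size bound, exactly in the spirit of Pocklington's Theorem \ref{Pock1}. The target is to show that every prime factor $q$ of $N$ satisfies $q \equiv 1 \pmod{p^{n-j}}$, after which the lower bound $N \geq (p^{n-j}+1)^2$ for composite $N$ will be contradicted by an upper bound on $N$ coming from (ii).

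First I would fix any prime $q \mid N$ and set $y := a^{Kp^{n-j-1}}$. Condition (i) gives $\Phi_p(y) \equiv 0 \pmod q$, so $y^p \equiv 1 \pmod q$ and consequently $\ord_q(y) \in \{1, p\}$. The value $1$ can be ruled out because it would give $\Phi_p(y) \equiv \Phi_p(1) = p \pmod q$ and hence $q = p$, contradicting $\gcd(N,p) = 1$. Hence $\ord_q(y) = p$ exactly --- this is the familiar cyclotomic-polynomial trick already used to prove Theorem \ref{GOS}.

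Next, writing $d := \ord_q(a)$, the identity $\ord_q(a^m) = d/\gcd(d,m)$ applied with $m = Kp^{n-j-1}$ yields the relation $d = p\cdot\gcd(d, Kp^{n-j-1})$. Comparing $p$-adic valuations on both sides should force $v_p(d) = v_p(K) + (n-j)$, because the alternative $v_p(d) \leq v_p(K) + n - j - 1$ would produce the contradiction $v_p(d) = 1 + v_p(d)$. In particular $p^{n-j} \mid d$, and since $d \mid q - 1$, we conclude $q \equiv 1 \pmod{p^{n-j}}$.

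Finally, condition (ii) rewrites as $K < p^{n-2j}$ (so $n-2j \geq 1$), yielding $N = Kp^n + 1 \leq p^{2(n-j)} - p^n + 1 < p^{2(n-j)}$. If $N$ were composite it would admit two prime factors (with multiplicity), each $\geq p^{n-j}+1$, forcing $N \geq (p^{n-j}+1)^2 > p^{2(n-j)}$, a contradiction. The one place needing genuine care is the $p$-adic valuation step when $K$ is not assumed coprime to $p$; tracking $v_p(K)$ explicitly, as above, should resolve it cleanly.
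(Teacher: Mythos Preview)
Your argument is correct. Note, however, that the paper does not itself supply a proof of Theorem~\ref{thmjose}; it is quoted from \cite{GrOMSa} as background. The closest in-house analogue is Theorem~\ref{thm2}, and comparing your approach to that proof is instructive.

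You exploit the full strength of the cyclotomic hypothesis: from $\Phi_p(y)\equiv 0\pmod q$ for \emph{each} prime $q\mid N$ you rule out $y\equiv 1\pmod q$ (else $q=p$), obtain $\ord_q(y)=p$, and deduce $p^{n-j}\mid q-1$ for \emph{every} prime divisor of $N$. The conclusion then follows from the classical Pocklington size bound $N\ge (p^{n-j}+1)^2>p^{2(n-j)}$ for composite $N$, contradicting $N<p^{2(n-j)}$ from~(ii). The paper's proof of Theorem~\ref{thm2} proceeds differently: its hypothesis $L\not\equiv 1\pmod N$ does \emph{not} descend to every prime $q$ (one could have $L\equiv 1\pmod q$ for some $q$), so it only obtains $p^{n-j}\mid\ord_N(a)$ and then invokes Lemma~\ref{lem2} to produce a \emph{single} prime divisor $q$ with $p^{n-j}\mid q-1$. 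The elementary divisibility Lemma~\ref{lem1} is then used to force $q=N$ directly, bypassing the square bound.

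What each buys: your route is the textbook Pocklington argument and is entirely self-contained. The paper's route, by relying on Lemma~\ref{lem1} rather than on ``every prime divisor'', is precisely what allows Theorem~\ref{thm2} to replace the $\Phi_p$/gcd computation by the bare inequality $L\neq 1\pmod N$---the point emphasised in the Remark following Theorem~\ref{thm2}. Your careful handling of $v_p(K)$ is a nice touch; in the paper's setting it is implicitly absorbed into the statement $p^{n-j}\mid\ord_N(a)$.
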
 

Theorem \ref{GOS} states that if $\Phi_p(a^{\frac{N-1}{p}}) \equiv 0\pmod N$ such that $N=Kp^n+1$, then $N$ is prime. In practice, $(\Phi_p(x))$ is easily computed as $\frac{x^p-1}{x-1}$. Thus, to use Theorem \ref{GOS} we would need to verify that $a^{N-1} \equiv 1 \pmod N$ and that $a^{\frac{N-1}{p}}-1$ is invertible $\pmod N$. The latter condition can be verified by checking that gcd$(a^{\frac{N-1}{p}}-1,N)=1$, which is well known to be an $O(\log N)$ computation, the same can be said as for computing $\Phi_p(a^{Kp^{n-j-1}})$ in \ref{thmjose} since it requires the gcd step. Note that we can compute $\Phi_p(x)$ as $1+x+x^2+\cdots+x^{p-1}$ but only when $p$ is small, since computing $\Phi_p(x)$ that way would be time consuming if $p$ is large.

We organized the paper as follows. In Section 2 we prove a couple of simple lemmas that we use to prove (Theorem \ref{main}) as well as a useful generalization of it (Theorem \ref{thm2}), we also put a Remark at the end of the Section to clarify the main contributions of Theorems \ref{main} and \ref{thm2} and provide a simple comparisons to existing methods. In Section 3 we provide two more useful tests but only for integers of the form $Kp^n+1$ such that  $p^j \ge p^{n-j} \ge (N-1)^{1/3}$ or $p^j \ge p^{2(n-j)} $ and $p^{n-j} \ge (N-1)^{2/7}$, and after each theorem we put a simple comparisons to existing methods, note that the above conditions on $p^{n-j}$ will be satisfied mostly, since in most cases we will have $p^n$ be much larger than $K$. Also as a corollaries we give two infinite families of integers $N$ whose primality can be certified only by proving that $a^{N-1} \equiv 1 \pmod N$. In section 4 we give a family of integers similar to Safe Primes (since $N-1$ for these numbers has large prime factor the same as Safe primes), those numbers will be defined as $a$-SafePrimes, we address some questions regarding the distribution of those classes of primes and provide some computations for $2$-SafePrimes, and the distribution of the related numbers $2$-SophieGermainPrimes will be addressed in section 5.
In Section 5 we provide some estimates to find the probability that a given prime is $a$-SophieGermainPrime, then we give some computations for $2$-SophieGermainPrimes  which shows that the accuracy of the estimates is very acceptable, even if we are dealing with $100$, $1000$ or $10000$ digits primes (random or consecutive), we also give a conjecture about the distribution of $2$-SophieGermainPrimes (and $a$-SophieGermainPrimes in general) which helped us to discover many primes larger than $10^{999}$ and some of them were larger than $10^{9999}$.


\section{A Generalization of Proth's Theorem} 

In this section we shall state and prove theorems \ref{main} and \ref{thm2}, whose provide a simple primality test for generalized Proth's numbers $N=Kp^n+1$. To prove the theorems we require two  lemmas.

\begin{lemma}\label{lem1}
	Assume that $A, P$ are integers with  $1\leq A \leq P$. If there is an integer $D>0$ such that 
	\[
	\frac{AP+1}{DP+1} \in \mathbb{Z},
	\]
	then we must have $D=A$.
\end{lemma}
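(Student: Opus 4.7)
The plan is to write the divisibility hypothesis as an equation, introduce the quotient as a positive integer parameter, and case-split on its size.

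First I would set $m = (AP+1)/(DP+1)$, which is a positive integer since numerator and denominator are both positive (using $A,P,D \geq 1$). Clearing denominators yields $AP+1 = m(DP+1)$, which I would rearrange to the single working identity
\[
P(A - mD) = m - 1.
\]
Everything else will be extracted from this equation.

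The case $m = 1$ is immediate: the right side vanishes, so $A - D = 0$ (since $P \neq 0$), giving $A = D$ as required. So the real content of the lemma is to exclude $m \geq 2$.

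For $m \geq 2$, the right side of the identity is a positive integer, hence so is the left, forcing $A - mD \geq 1$ and also $P \mid (m-1)$. Since $m - 1 \geq 1$ and $P$ divides it, we must have $m - 1 \geq P$, i.e.\ $m \geq P+1$. Substituting back gives
\[
A \;\geq\; mD + 1 \;\geq\; (P+1)\cdot 1 + 1 \;=\; P + 2,
\]
which contradicts the standing hypothesis $A \leq P$. This completes the argument.

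I do not anticipate a real obstacle here; the proof is essentially the one identity $P(A-mD) = m-1$ plus two size estimates. The only step that requires any care is noticing that when $m \geq 2$ the quantity $m-1$ is a \emph{positive} multiple of $P$, since that is what produces a gap large enough to violate $A \leq P$.
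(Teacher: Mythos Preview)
Your proof is correct. Your route differs from the paper's: the paper first observes $D\le A$, performs Euclidean division $A=cD+r$ with $0\le r<D$, rewrites
\[
\frac{AP+1}{DP+1}=c+\frac{rP+1-c}{DP+1},
\]
and then argues from $c\le A\le P$ that the residual fraction lies strictly between $0$ and $1$ unless $r=0$ and $c=1$. You instead name the quotient $m=(AP+1)/(DP+1)$ directly, obtain the single identity $P(A-mD)=m-1$, and dispose of $m\ge 2$ via the divisibility $P\mid m-1$ and the resulting bound $A\ge mD+1\ge P+2$. Your argument is a bit leaner, needing no division-with-remainder step and only one algebraic identity; the paper's version has the mild advantage of making the structure of the quotient visible (it shows the integer part is the Euclidean quotient $c$), but both reach the conclusion with comparable effort.
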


\begin{proof}
	If $DP+1 \mid AP+1$ we must have $D\leq A$. Write $A=cD+r$ with $c> 0$ and $0\leq r<D$. Hence
	
	\begin{align*}
		\frac{(cD+r)P+1}{DP+1}&=\frac{cDP+rP+1+c-c}{DP+1}\\
		&=c+\frac{rP+1-c}{DP+1}.
	\end{align*}

	Note that $c\leq A \leq P$, so if $r\geq 1$ we will have $rP-c+1\geq 1$, hence the numerator of the last fraction is a positive number which is strictly less than its denominator (since $r<D$). Thus the fraction can only be an integer when $r=0$ and $c=1$, proving the result.
\end{proof}

Recall that the \emph{order} of $a \pmod N $ is the least positive integer $m$ such that $a^m \equiv 1 \pmod N$. We shall denote the order of $a \pmod N $ with $\ord_N(a)$.

\begin{lemma}\label{lem2}
	Assume that $p^n \mid \ord_N(a)$ where $p$ is prime and gcd$(p,N)=1$, then there must exist a prime divisor $q$ of $N$ such that $p^n \mid \ord_q(a)$, therefore $p^n \mid q-1$ .
\end{lemma}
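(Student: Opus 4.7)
The plan is to decompose $(\mathbb{Z}/N\mathbb{Z})^{*}$ by the Chinese Remainder Theorem into its prime-power components, locate the prime-power factor responsible for the $p^n$ in $\ord_N(a)$, and then push that divisibility back down from the prime power to the prime itself.

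Write $N = \prod_{i} q_i^{k_i}$ for the prime factorization of $N$. The isomorphism supplied by CRT gives
\[
\ord_N(a) \;=\; \lcm_{i} \, \ord_{q_i^{k_i}}(a).
\]
Since $p$ is prime and $p^n$ divides this least common multiple, the $p$-adic valuation of some individual factor must be at least $n$; fix such an index and set $q := q_i$, $k := k_i$, so that $p^n \mid \ord_{q^k}(a)$. Note that $q \neq p$ because $\gcd(p,N) = 1$.

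For odd $q$, the group $(\mathbb{Z}/q^k\mathbb{Z})^{*}$ is cyclic of order $q^{k-1}(q-1)$, and the reduction map to $(\mathbb{Z}/q\mathbb{Z})^{*}$ has kernel of order $q^{k-1}$; a standard consequence is the identity $\ord_{q^k}(a) = q^{j}\,\ord_q(a)$ for some $0 \leq j \leq k-1$. Since $\gcd(p,q)=1$, every factor of $p^n$ on the left-hand side must come from $\ord_q(a)$, giving $p^n \mid \ord_q(a)$. The edge case $q = 2$ forces $p$ odd and $\ord_{2^k}(a)$ to be a power of $2$, making $p^n \mid \ord_{2^k}(a)$ impossible unless $n=0$, in which case the conclusion is trivial. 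Finally, Fermat's little theorem gives $\ord_q(a) \mid q-1$, and hence $p^n \mid q-1$.

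The main obstacle is really just the order-lifting identity $\ord_{q^k}(a) = q^{j}\,\ord_q(a)$; this is elementary for odd $q$ once one recalls the cyclic structure of $(\mathbb{Z}/q^k\mathbb{Z})^{*}$, and the $q=2$ case is disposed of by parity. Everything else (CRT, extraction of the prime with the required $p$-valuation, Fermat) is routine bookkeeping.
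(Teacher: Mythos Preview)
Your proof is correct and follows essentially the same route as the paper: factor $N$, use CRT to write $\ord_N(a)$ as an $\lcm$ over prime-power moduli, pick a component carrying the full power $p^n$, and invoke the lifting identity $\ord_{q^{k}}(a)=q^{j}\,\ord_q(a)$ together with $\gcd(p,q)=1$. The only cosmetic difference is that you single out the case $q=2$, whereas the paper's identity already covers it (since $\ord_2(a)=1$, so $\ord_{2^{k}}(a)$ is automatically a power of $2$).
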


\begin{proof}
	Assume that the prime factorization of $N=q_1^{e_1} \cdots q_s^{e_s}$, then $\ord_N(a)=\lcm(\ord_{q_1^{e_1}}(a),\cdots,\ord_{q_s^{e_s}}(a))$, but since $p^n \mid \ord_N(a)$, then there must exist a prime divisor $q_c$ of $N$ such that $p^n \mid \ord_{q_c^{e_c}}(a)$, but since $\ord_{q_c^{e_c}}(a) = q_c^k \times \ord_{q_c}(a) $, where $k \leq e_c-1$, and since gcd$(p,N)=1$, then $p^n$ must divide $\ord_{q_c}(a)$, therefore, $p^n \mid q_c-1$ .
\end{proof}

\begin{theorem}\label{main}
	Let $N=Kp^n+1$, where $p$ is prime, $p^n \ge K$. Assume that there exists an integer $a$ such that:
	\begin{enumerate}
		\item[(i)] $a^{Kp^{n-1}} \equiv L \neq 1 \pmod N$, and
		\item[(ii)] $L^p \equiv 1 \pmod N$.
	\end{enumerate} 
	Then $N$ is prime.
\end{theorem}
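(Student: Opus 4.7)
The plan is to combine conditions (i) and (ii) to show that $p^n$ divides the order of $a$ modulo $N$, then use Lemma \ref{lem2} to find a prime factor $q$ of $N$ of the form $mp^n+1$, and finally apply Lemma \ref{lem1} to force $q=N$.

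First I would observe that condition (ii) together with $L = a^{Kp^{n-1}}$ gives $a^{Kp^n}\equiv 1\pmod N$, which in particular shows $\gcd(a,N)=1$, so $d := \ord_N(a)$ is well defined. From $a^{Kp^n}\equiv 1$ we get $d \mid Kp^n$, while condition (i) together with $L\not\equiv 1\pmod N$ gives $a^{Kp^{n-1}}\not\equiv 1\pmod N$, hence $d \nmid Kp^{n-1}$. Writing $K = p^b K'$ with $\gcd(K',p)=1$ and $d = p^j s$ with $\gcd(s,p)=1$, the divisibility $d\mid Kp^n$ forces $s\mid K'$, so the condition $d\nmid Kp^{n-1}$ can only fail through the $p$-part, giving $j \geq n+b \geq n$. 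Thus $p^n \mid d$.

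Next, since $N \equiv 1\pmod p$ we have $\gcd(p,N)=1$, and so Lemma \ref{lem2} applies: there exists a prime divisor $q$ of $N$ with $p^n \mid q-1$. Write $q = mp^n+1$ for some integer $m\geq 1$. Then $q \mid N$ means
\[
\frac{Kp^n+1}{mp^n+1} \in \mathbb{Z},
\]
and the hypothesis $p^n \geq K$ (with $K\geq 1$) is exactly the hypothesis $1\leq K \leq p^n$ needed to invoke Lemma \ref{lem1} with $A=K$, $P=p^n$, $D=m$. That lemma gives $m = K$, so $q = Kp^n+1 = N$, proving $N$ is prime.

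The argument is essentially straightforward once both lemmas are in hand; the only place that requires any care is the order computation, where one has to treat the possibility that $p \mid K$ (so that $Kp^n$ has $p$-adic valuation larger than $n$) to make sure the exponent $p^n$ truly divides $\ord_N(a)$ rather than some smaller power of $p$. Everything else is a direct application of Lemmas \ref{lem1} and \ref{lem2}.
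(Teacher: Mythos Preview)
Your proof is correct and follows exactly the paper's route: combine (i) and (ii) to get $p^n\mid\ord_N(a)$, apply Lemma~\ref{lem2} to produce a prime factor $q=mp^n+1$ of $N$, and then use Lemma~\ref{lem1} with $A=K$, $P=p^n$ to force $m=K$. The extra care you take with the case $p\mid K$ is more than strictly necessary (from $d\mid Kp^n$ the prime-to-$p$ part of $d$ already divides $K$, so $p^n\nmid d$ would immediately give $d\mid Kp^{n-1}$ regardless of $v_p(K)$), but it is certainly not wrong.
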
 

\begin{proof}
	Assume both of (i) and (ii) hold, then $p^n \mid \ord_N(a)$, but since gcd$(p,N)=1$, then there must exist a prime divisor $q$ of $N$ such that $p^n \mid q-1$, due to Lemma \ref{lem2}, hence $q=hp^n+1$ divides $N$ implying  $\frac{Kp^n+1}{hp^n+1}\in \mathbb{Z}$, but due to Lemma \ref{lem1} we must have $h=K$, therefore, $q=N$ and $N$ is prime.
\end{proof}

As corollary, we show that the primality of $2Kp+1$ numbers with $2K < \log_{a}(2Kp+1)$ can be certified only by proving that $a^{N-1} \equiv 1 \pmod N$, and then we generalize \ref{main}.

\begin{corollary}\label{cor24} Let $N=2Kp+1$, where $p$ is prime, $2K \leq p$, and $2K < \log_{a}(2Kp+1)$. Then $N$ is prime if and only if
	\begin{equation} 
		a^{N-1} \equiv 1 \pmod N 
	\end{equation}
	
\end{corollary}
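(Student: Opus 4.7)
The plan is to reduce the corollary to Theorem \ref{main} in the case $n=1$, using $2K$ in place of $K$. The forward direction is a one-line consequence of Fermat's little theorem once coprimality is checked, so the content is in the converse.

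First I would dispose of the "only if" direction. If $N$ is prime, then $a^{N-1}\equiv 1\pmod{N}$ holds by Fermat as long as $N\nmid a$. The hypothesis $2K<\log_a N$ says $a^{2K}<N$, and since $a\ge 2$ and $K\ge 1$ we have $a<N$; hence $N\nmid a$.

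For the "if" direction I would rewrite $N=(2K)\cdot p+1$ so that it fits the template of Theorem \ref{main} with the parameters $K\leftarrow 2K$, $n\leftarrow 1$, $p\leftarrow p$. The size hypothesis $p^n\ge K$ of Theorem \ref{main} then becomes $p\ge 2K$, which is exactly assumed. Set $L:=a^{2K}\bmod N$; what needs to be verified is (i) $L\not\equiv 1\pmod N$ and (ii) $L^p\equiv 1\pmod N$. Condition (ii) is immediate: $L^p\equiv a^{2Kp}=a^{N-1}\equiv 1\pmod N$ by assumption.

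The only real step is (i). Here the inequality $2K<\log_a N$ is used in an essential way: it forces $a^{2K}<N$, so the reduction $a^{2K}\bmod N$ equals $a^{2K}$ itself, and the bound $a^{2K}\ge 2^{2}=4>1$ then gives $L\not\equiv 1\pmod N$. With both hypotheses of Theorem \ref{main} satisfied, that theorem yields primality of $N$. The only subtlety worth flagging is precisely this use of the logarithmic bound to convert a congruence statement ($L\not\equiv 1\pmod N$) into a strict numerical statement ($a^{2K}\neq 1$); everything else is a direct substitution.
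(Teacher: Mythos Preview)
Your proof is correct and follows essentially the same route as the paper: apply Theorem~\ref{main} with $n=1$ and coefficient $2K$, using the bound $a^{2K}<N$ to force $L\not\equiv 1$, and cite Fermat for the other direction. You are in fact slightly more careful than the paper, which glosses over the check that $N\nmid a$ needed for Fermat's little theorem in the ``only if'' direction.
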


\begin{proof}
	By the hypothesis we have $2K < \log_{a}(2Kp+1)$, hence $a^{2K} \equiv L \neq 1 \pmod N $, and $a^{N-1} \equiv 1 \pmod N$. Thus all the conditions of Theorem \ref{main} are satisfied and $N$ is prime. The converse is simply Fermat's primality test.
\end{proof}

\begin{theorem}\label{thm2} Let $N=Kp^n+1$, where $p$ is prime, $p^{n-j} \ge Kp^{j} \Longrightarrow p^{2(n-j)} \ge Kp^n \Longrightarrow 2(n-j) \geq \log_{p}(K)+n$ where $(0 \leq j \leq n-1)$. Assume that there exists an integer $a>1$ such that: 
	\begin{enumerate}
		\item[(i)] $a^{Kp^{n-j-1}} \equiv L \neq 1 \pmod N$, and
		\item[(ii)] $L^{p^{j+1}} \equiv 1 \pmod N$.
	\end{enumerate}
	Then $N$ is prime. 
\end{theorem}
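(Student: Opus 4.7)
My plan is to imitate the proof of Theorem \ref{main} almost verbatim, replacing $n$ by $n-j$ and $K$ by $Kp^j$ throughout, so that the prime divisor of $N$ produced by Lemma \ref{lem2} is forced by Lemma \ref{lem1} to be $N$ itself.

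The first step is to combine (i) and (ii) into the divisibility $p^{n-j} \mid \ord_N(a)$. Raising $L = a^{Kp^{n-j-1}}$ to the power $p^{j+1}$ and applying (ii) gives $a^{Kp^n} = a^{N-1} \equiv 1 \pmod N$, so $\ord_N(a) \mid Kp^n$. Write $\ord_N(a) = d' p^m$ with $\gcd(d',p) = 1$; the previous divisibility forces $d' \mid K$ and $m \le n$. If we had $m \le n-j-1$, then $\ord_N(a)$ would divide $Kp^{n-j-1}$, contradicting (i). Therefore $m \ge n-j$, i.e.\ $p^{n-j} \mid \ord_N(a)$.

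Since $\gcd(p,N) = 1$ (because $N \equiv 1 \pmod p$), Lemma \ref{lem2} now supplies a prime divisor $q$ of $N$ with $p^{n-j} \mid q-1$, so $q = hp^{n-j}+1$ for some integer $h \ge 1$. To finish, I would rewrite $N = (Kp^j)\cdot p^{n-j} + 1$ and invoke Lemma \ref{lem1} with $P = p^{n-j}$, $A = Kp^j$, $D = h$. The hypothesis $p^{n-j} \ge Kp^j$ is precisely the bound $A \le P$ required by Lemma \ref{lem1}, which then forces $h = Kp^j$ and hence $q = Kp^n + 1 = N$, proving that $N$ is prime.

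The only subtle point is the first step: the two hypotheses do not pin down $\ord_N(a)$ exactly, so one has to split off the $p$-part of the order to extract the precise divisibility $p^{n-j} \mid \ord_N(a)$ that Lemma \ref{lem2} needs. After that, the rest is a direct transcription of the Theorem \ref{main} argument, with the assumption $p^{n-j} \ge Kp^j$ tuned exactly so that Lemma \ref{lem1} applies to the pair $(Kp^j, p^{n-j})$.
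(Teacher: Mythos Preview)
Your proof is correct and follows essentially the same route as the paper's: deduce $p^{n-j}\mid\ord_N(a)$ from (i)--(ii), invoke Lemma~\ref{lem2} to get a prime divisor $q=hp^{n-j}+1$ of $N$, then apply Lemma~\ref{lem1} with $A=Kp^j$ and $P=p^{n-j}$ to force $q=N$. The paper's version is terser (it just says ``as in the proof of Theorem~\ref{main}''), whereas you spell out the order argument explicitly; one very minor point is that your claim ``$m\le n$'' need not hold if $p\mid K$, but this is never used---only the implication ``$m\le n-j-1\Rightarrow\ord_N(a)\mid Kp^{n-j-1}$'' matters, and that is correct.
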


\begin{proof}
	As in the proof of Theorem \ref{main} we can deduce that there must exist a prime divisor $q$ of $N$ such that  $p^{n-j} \mid q-1$,  hence $q=hp^{n-j}+1$ divides $N$, which means that  $\frac{Kp^{j}.p^{n-j}+1}{hp^{n-j}+1}\in \mathbb{Z}$, but due to Lemma \ref{lem1} we must have $h=Kp^{j}$, therefore, $q=N$ and $N$ is prime.
\end{proof}

\begin{remark}
	The main contribution of Theorem \ref{main} is to show that the gcd step in \ref{GOS} can in fact be replaced by simply verifying that $a^{\frac{N-1}{p}} \neq 1 \pmod N$. Computationally and theoretically this provides a simpler generalization of Proth's theorem. Note that for small values of $p$ we can compute $\Phi_p(X)$ as $1+X+X^2+\cdots +X^{p-1}$ and then theorem \ref{GOS} will not require any gcd computations and will be equivalent to \ref{main} (computationally).
	
	As an example to illustrate Theorem \ref{main}, consider $N=2\cdot107^3+1=2450087$. We can verify that (i) $2^{2\cdot107^2} \equiv 1302367 \pmod {2450087}$ and (ii)$1302367^{107} \equiv 1 \pmod {2450087}$, and thus the hypothesis of Theorem \ref{main} are satisfied and we deduce that $N$ is prime. The same can of course be obtained from Theorem \ref{GOS} by computing $\Phi_{107}(2^{2\cdot107^2})\pmod {2450087}$, but the latter requires the additional step of verifying that gcd$(1302367-1, 2450087)=1$ (since $p=107$ is relatively big) in order have a meaningful ``denominator" when evaluating $\phi_{107}$ $\pmod {2450087}$. 
	
	The same can be said as for the comparison of \ref{thm2} to \ref{thmjose}, since if $p$ is big, then \ref{thmjose} will require the gcd step, since then it will be difficult to compute $\Phi_p(a^{Kp^{n-j-1}})$ as $1+(a^{Kp^{n-j-1}})+(a^{Kp^{n-j-1}})^2+ \cdots + (a^{Kp^{n-j-1}})^{p-1}$. Note that if $N$ is indeed prime, then the chances of proving the primality of $N$ are even whether we used \ref{thmjose} or \ref{thm2}, since if $N=Kp^n+1$ then the base ($a$) will fail to satisfy condition (i) in theorems \ref{thm2} and \ref{thmjose} only when ($a$) itself is a $p^{j+1}$-th power residue modulo $N$, which happens exactly for $\frac{1}{p^{j+1}}$ of the possible choices for ($a$), which means that condition (i) in \ref{thm2} and \ref{thmjose} will be satisfied by a number of bases that equals to $Kp^n-Kp^{n-j-1}$, but since ($j$) in \ref{thm2} can be as large as ($j$) in \ref{thmjose} then the number of the satisfying bases of (i) in both of \ref{thm2} and \ref{thmjose} is the same.

\end{remark}

\section{Increasing the chances of proving the primality of $Kp^n+1$ primes}
In this section we provide two tests for $Kp^n+1$ numbers such that $p^j \ge p^{n-j} \ge (N-1)^{1/3}$ or $p^j \ge p^{2(n-j)} $ and $p^{n-j} \ge (N-1)^{2/7}$, those two tests can increase the chances of proving the primality of $Kp^n+1$ primes by increasing the number of the satisfying bases ($a$) of their conditions of primality. We shall give an example after each theorem to show their main contributions.

\begin{theorem}\label{thm31}
	Let $N=Kp^n+1 \neq p^{3(n-j)}+1$,  where $p$ is prime and $p^j \ge p^{n-j} \ge (N-1)^{1/3}$. If there exists an integer $a>1$ such that: 
	\begin{enumerate}
		\item[(i)] $a^{Kp^{n-j-1}} \equiv L \neq 1 \pmod N$, and
		\item[(ii)] $L^{p^{j+1}} \equiv 1 \pmod N$.
	\end{enumerate}
	Then $N$ is prime.
\end{theorem}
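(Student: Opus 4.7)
The plan is to mimic the structure of the proof of Theorem \ref{thm2}. First I would use (i) and (ii) together with Lemma \ref{lem2} to produce a prime divisor $q = hp^{n-j}+1$ of $N$ with $h\ge 1$: the two conditions give $a^{N-1}\equiv 1\pmod N$ while $a^{Kp^{n-j-1}}\neq 1$, so $p^{n-j}\mid \ord_N(a)$, and the lemma then supplies such a $q$.

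Assume for contradiction that $N$ is composite, and set $m=N/q\ge 2$. Since $N\equiv q\equiv 1\pmod{p^{n-j}}$, also $m\equiv 1\pmod{p^{n-j}}$, so $m=kp^{n-j}+1$ with $k\ge 1$. Expanding $qm=Kp^j\cdot p^{n-j}+1$ and cancelling one factor of $p^{n-j}$ yields the key relation
\[
hk\,p^{n-j}+(h+k)=Kp^j.
\]
Because $p^j\ge p^{n-j}$, the right-hand side is divisible by $p^{n-j}$, so $h+k=sp^{n-j}$ for some $s\ge 1$, and then $hk=Kp^{2j-n}-s$. Rewriting the hypothesis $p^{n-j}\ge (N-1)^{1/3}$ as $K\le p^{2n-3j}$ gives the companion bound $hk\le p^{n-j}-s$.

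The heart of the argument, and the step I expect to be the main obstacle, is showing that $\min(h,k)=1$. Taking $h\le k$ without loss of generality, if $h\ge 2$ then $hk$ (for fixed sum) is minimized at the extreme $h=2$, so $hk\ge 2(h+k)-4=2sp^{n-j}-4$. Combining with $hk\le p^{n-j}-s$ forces $(2s-1)p^{n-j}+s\le 4$, which has no solution for $s\ge 1$ and $p^{n-j}\ge 2$. Hence $h=1$, and substituting $k=sp^{n-j}-1$ into $hk=Kp^{2j-n}-s$ gives $s(p^{n-j}+1)=Kp^{2j-n}+1\le p^{n-j}+1$, forcing $s=1$ and $K=p^{2n-3j}$. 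But this is exactly the excluded case $N=p^{3(n-j)}+1$, a contradiction. Therefore $N$ is prime.
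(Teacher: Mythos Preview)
Your proof follows essentially the same route as the paper's: obtain a prime divisor $q\equiv 1\pmod{p^{n-j}}$ via Lemma~\ref{lem2}, write the cofactor $N/q$ in the same form, expand to get $hk\,p^{n-j}+(h+k)=Kp^j$, use $p^{n-j}\mid h+k$, and then squeeze the product against the sum to force $\min(h,k)=1$, landing in the excluded case $N=p^{3(n-j)}+1$. The paper phrases the squeeze as $hk<(N-1)^{1/3}\le h+k$ together with $hk\ge h+k-1$, while you parametrize $h+k=sp^{n-j}$ and use $(h-2)(k-2)\ge 0$; these are the same idea in slightly different dress.

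One small slip to fix: the assertion that $(2s-1)p^{n-j}+s\le 4$ has no solution for $s\ge 1$ and $p^{n-j}\ge 2$ is literally false (take $s=1$, $p^{n-j}\in\{2,3\}$). The argument survives because under your assumption $h\ge 2$ (and $k\ge h$) one has $h+k=sp^{n-j}\ge 4$, which rules out those borderline values; just make that observation explicit.
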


\begin{proof}
	Assume both of (i) and (ii) hold, then $p^{n-j} \mid \ord_N(a)$, but since $gcd(p,N)=1$, then there must exist a prime divisor $q$ of $N$ such that $p^{n-j} \mid q-1$, due to Lemma \ref{lem2}, therefore, we have two cases:
	\begin{enumerate}
		\item[(i)] $q=N$ and $N$ is prime, or
		\item[(ii)] $q < N$ and $N$ is composite.
	\end{enumerate}
	Now, we will try to exclude (ii) by contradiction. In(ii) we have $q \mid N$, hence, $\frac{N}{q}=M \in \mathbb{Z}$, but since $q=ap^{n-j}+1$ then we must have $M=bp^{n-j}+1$ as well, now, if we multiply $q$ by $M$ we will get $N=Kp^n+1=abp^{2(n-j)}+(a+b)p^{n-j}+1 \Longrightarrow Kp^{j}=abp^{n-j}+a+b$, therefore we must have $p^{n-j} \mid (a+b) \Longrightarrow a+b \ge p^{n-j} \Longrightarrow a+b \ge (N-1)^{1/3}$, but since $ab$ can't exceed or equal to $(N-1)^{1/3}$, and since $ab \ge a+b-1$, then the only chance we can have $ab < (N-1)^{1/3}$ and $p^{n-j} \mid (a+b)$ is when $p^{n-j}=(N-1)^{1/3}$, $ab=p^{n-j}-1$ and $a+b=p^{n-j} \Longrightarrow N=p^{3(n-j)}+1$, but since we excluded the latter case by the assumption on $N$, then we must have $q=N$ and $N$ is prime.
	
\end{proof}
To see the utility of theorem \ref{thm31} you should recall that if $N=Kp^n+1$ is prime, then $a$ will fail to satisfy condition (i) of Theorems \ref{thmjose} \ref{thm2} and \ref{thm31} only when $a$ itself is a $p^{j+1}$-th power residue modulo $N$, which happens exactly for $\frac{1}{p^{j+1}}$ of the possible choices for $a$, but $j$ in \ref{thm31} can be larger than $j$ in theorems \ref{thm2} and \ref{thmjose}. Hence (i) in \ref{thm31} is satisfied by more bases, which equals to $Kp^n-Kp^{n-j-1}$. To give a concrete example, consider $N=2\cdot 3^{17}+1$. Then according to theorems \ref{thmjose} and \ref{thm2} we can only have $j \leq 8$, however taking $1 \le j \le 8$ and $a=136837116$ will not satisfy (i) in neither \ref{thmjose} nor \ref{thm2}. But in \ref{thm31} we can have $j \leq 11$, and by taking $j=9$ for example, we obtain $136837116^{2\cdot 3^{7}}\equiv 216758952 \pmod N$ and the result follows from Theorem \ref{thm31}.

As a corollary, we give the following class of integers $N$ whose primality can be certified only by proving that $a^{N-1} \equiv 1 \pmod N$ (Fermat's test).

\begin{corollary}\label{cor32}
	Let $N=2Kp^2+1$,  where $p$ is prime, $2K < p$ and $2K < \log_{a}(2Kp^2+1)$ . If there exists an integer $a>1$ such that:
	\begin{equation}
		a^{N-1} \equiv 1 \pmod N.
	\end{equation}
	Then $N$ is prime.
\end{corollary}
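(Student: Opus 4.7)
The plan is to invoke Theorem \ref{thm31} with the triple $(K,n,j)$ of that theorem taken to be $(2K,2,1)$; that is, I reinterpret $N=(2K)p^{2}+1$ in the shape $K'p^{n}+1$ with $K'=2K$, $n=2$, $j=1$, and then read off what conditions (i) and (ii) of Theorem \ref{thm31} become. Under this identification, condition (i) reads $a^{2K}\equiv L\not\equiv 1\pmod N$ and condition (ii) reads $L^{p^{2}}\equiv 1\pmod N$; chaining the two yields $a^{2Kp^{2}}=a^{N-1}\equiv 1\pmod N$, which is exactly the single Fermat congruence the corollary is allowed to use.

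First I would verify the shape hypotheses of Theorem \ref{thm31}. The size requirement $p^{j}\ge p^{n-j}\ge (N-1)^{1/3}$ collapses to $p\ge (2Kp^{2})^{1/3}$, i.e.\ $p\ge 2K$, which is given by hypothesis. The non-degeneracy $N\neq p^{3(n-j)}+1=p^{3}+1$ rearranges to $2K\neq p$, again immediate from $2K<p$. Condition (ii) of Theorem \ref{thm31} is literally our Fermat assumption $a^{N-1}\equiv 1\pmod N$. For condition (i), the assumption $2K<\log_{a}(N)$ gives $a^{2K}<N$, while $a>1$ and $K\ge 1$ give $a^{2K}\ge 4>1$. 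Hence $1<a^{2K}<N$, so the reduction of $a^{2K}$ modulo $N$ equals $a^{2K}$ itself, which is not $1$. With both hypotheses of Theorem \ref{thm31} satisfied, its conclusion gives the primality of $N$.

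The main obstacle is essentially bookkeeping: one must be careful that the $K$ of Theorem \ref{thm31} is being played by our $2K$, so that the exponent $Kp^{n-j-1}$ in condition (i) becomes $2K$ (not $K$) and the chain $L^{p^{j+1}}=a^{2Kp^{2}}=a^{N-1}$ lines up with Fermat's test on the nose. The only genuine idea is noticing that the logarithmic bound $2K<\log_{a}(N)$ is not used to control any modular reduction but solely to force the integer $a^{2K}$ to sit strictly between $1$ and $N$, which is precisely what rules out $L=1$ in condition (i) and lets a single round of Fermat's test certify primality for this family.
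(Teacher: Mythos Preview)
Your proof is correct and follows exactly the same route as the paper: both apply Theorem~\ref{thm31} with $n=2$ and $j=1$, verify the size hypothesis $p\ge (N-1)^{1/3}$ from $2K<p$, and use $2K<\log_a N$ to force $1<a^{2K}<N$ so that condition~(i) holds. Your write-up is in fact slightly more careful than the paper's, since you explicitly check the non-degeneracy condition $N\neq p^{3(n-j)}+1$ (equivalently $2K\neq p$), which the paper's proof leaves implicit.
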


\begin{proof}
	Since $2K<p$ then $j=1$ will satisfy the condition $p^j \ge p^{2-j} \ge (N-1)^{1/3}$, and since $2K < \log_{a}(2Kp^2+1)$ then we will have $a^{2Kp^{n-j-1}}=a^{2Kp^{2-1-1}}=a^{2K}  \equiv L \neq 1 \pmod N$, also we have $a^{N-1} \equiv 1 \pmod N$. Thus all the conditions of Theorem \ref{thm31} are satisfied and $N$ is prime.
\end{proof}
Note that $a=2$ is the best choice in \ref{cor32}, since then we will have many numbers $K$ satisfying the condition $2K < \log_{a}(2Kp^2+1)$. The same can be said as for \ref{cor24} and \ref{cor35}.

Now, we will proceed to the second test which requires the following lemma.

\begin{lemma}\label{lem33}
	If $X > 11+4\sqrt{7}$, $a \le \frac{X}{2}$, $a \neq 1$, $a \neq \sqrt{X+1}$ and $\frac{(X-a)(a)+1}{X}=M \in \mathbb{Z}$. then $M > \sqrt{X}$.  
\end{lemma}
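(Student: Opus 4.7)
The plan is to rewrite $M$ algebraically and then reduce the statement to a one-variable inequality that is minimized at a single boundary value. First I would observe that
\[
M \;=\; \frac{a(X-a)+1}{X} \;=\; a - \frac{a^{2}-1}{X},
\]
so $M\in\mathbb{Z}$ forces $X\mid a^{2}-1$. Writing $a^{2}-1=kX$ for a nonnegative integer $k$, this gives the clean form $M=a-k$ with $a=\sqrt{kX+1}$. The exclusions in the hypothesis kill the easy cases: $k=0$ means $a=1$, and $k=1$ means $a=\sqrt{X+1}$. Hence we may assume $k\ge 2$.

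Next I would restate the goal $M>\sqrt{X}$ as $\sqrt{kX+1}>k+\sqrt{X}$. Both sides are positive, so squaring is reversible and yields the equivalent inequality
\[
(k-1)(X-k-1) \;>\; 2k\sqrt{X},
\]
after rearranging $kX+1>X+2k\sqrt{X}+k^{2}$. So I need to verify this for every integer $k\ge 2$ that is compatible with the bound $a\le X/2$.

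The hypothesis $a\le X/2$ translates via $a^{2}=kX+1$ to $k\le X/4-1/(4X)$, so in particular $k<X/4$. Setting $f(k)=(k-1)(X-k-1)-2k\sqrt{X}$, a direct derivative computation gives $f'(k)=X-2k-2\sqrt{X}$, which is positive whenever $k<X/2-\sqrt{X}$; and for $X>11+4\sqrt{7}\approx 21.58$ one checks $X/4<X/2-\sqrt{X}$, so $f$ is strictly increasing on the relevant range $[2,X/4]$. Therefore the minimum of $f$ occurs at $k=2$, where
\[
f(2) \;=\; X-3-4\sqrt{X}.
\]
Solving $X-4\sqrt{X}-3>0$ in $y=\sqrt{X}$ gives $y>2+\sqrt{7}$, i.e., $X>(2+\sqrt{7})^{2}=11+4\sqrt{7}$, which is exactly the hypothesis.

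The only mildly delicate step is the monotonicity argument: one must confirm that the admissible range $k\in[2,\lfloor X/4\rfloor]$ really lies to the left of the critical point $X/2-\sqrt{X}$, which is where the threshold $X>11+4\sqrt{7}$ is actually used twice (once for monotonicity and once for positivity at $k=2$). Once this is in place, the proof reduces to a direct check at the endpoint $k=2$ and the conclusion $M=a-k>\sqrt{X}$ follows.
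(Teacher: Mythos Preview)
Your proof is correct and follows essentially the same line as the paper's: both observe $X\mid a^2-1$, set $k=(a^2-1)/X$, dispose of $k=0,1$ via the hypotheses, and for $k\ge 2$ reduce everything to the boundary inequality $X-4\sqrt{X}-3>0$, i.e., $X>11+4\sqrt{7}$. The only cosmetic difference is that the paper parametrizes the remaining case by $a$ (deducing $a>\sqrt{X}+2$ from $k\ge 2$ and then noting $M=\frac{(X-a)a+1}{X}$ is increasing in $a$ on $(0,X/2]$), whereas you parametrize by $k$ and show $f(k)=(k-1)(X-k-1)-2k\sqrt{X}$ is increasing on $[2,X/4]$; both arrive at the same endpoint check. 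One small remark: the monotonicity step only needs $X>16$, not the full $X>11+4\sqrt{7}$, so your claim that the threshold is ``used twice'' is slightly overstated---it is genuinely needed only at $k=2$.
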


\begin{proof}
	If $\frac{(X-a)(a)+1}{X} \in \mathbb{Z}$ then we should have $\frac{-a^2+1}{X} \in \mathbb{Z}$ as well, therefore we could have: 
	\begin{itemize}
		\item[(i)] $\frac{-a^2+1}{X}=0 \Longrightarrow a=1 \Longrightarrow M=1 $
		\item[(ii)]$\frac{-a^2+1}{X}=-1  \Longrightarrow a=\sqrt{X+1} \Longrightarrow M=\sqrt{X+1}-1 $
		\item[(iii)]$\frac{-a^2+1}{X} \le -2 \Longrightarrow  \frac{a^2-1}{X} \ge 2 $.
	\end{itemize}
	In (iii) we have $\frac{a^2-1}{X} \ge 2 $, but since $X > 11+4\sqrt{7}$ then we must have $ a > \sqrt{X}+2$, but for which $X > 11+4\sqrt{7}$ and $\frac{X}{2} \ge a > \sqrt{X}+2$ we will have $\frac{(X-a)(a)+1}{X} > \sqrt{X}$ which proves our result. 
\end{proof}

\begin{theorem}\label{thm34}
	Let $N=Kp^n+1 \neq p^{3(n-j)}+1$, $N \neq (\sqrt{p^{n-j}+1}-1)p^{3(n-j)}+1$ and $N \neq H^3p^{3(n-j)}+1$, where $p$ is prime, $p^j \ge p^{2(n-j)} $, $p^{n-j} \ge (N-1)^{2/7}$ and $p^{n-j} > 11+4\sqrt{7}$. If there exists an integer $a>1$ such that:
	\begin{enumerate}
		\item[(i)] $a^{Kp^{n-j-1}} \equiv L \neq 1 \pmod N$, and
		\item[(ii)] $L^{p^{j+1}} \equiv 1 \pmod N$.
	\end{enumerate}
	Then $N$ is prime.
\end{theorem}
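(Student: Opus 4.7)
The plan is to follow the framework of Theorem~\ref{thm31}: Lemma~\ref{lem2} yields a prime divisor $q = \alpha P + 1$ of $N$ (with $P := p^{n-j}$ and $\alpha \ge 1$), and if $N$ is composite then the cofactor $M = N/q$ satisfies $M \equiv 1 \pmod P$ (since both $N$ and $q$ do), so $M = \beta P + 1$ with $\beta \ge 1$. Expanding $N = (\alpha P + 1)(\beta P + 1)$ against $N - 1 = Kp^j \cdot P$, the hypothesis $j \ge 2(n-j)$ (equivalent to $p^j \ge p^{2(n-j)}$) forces $P \mid Kp^j$, so $\alpha + \beta = cP$ for some positive integer $c$, and $\alpha\beta + c = K^* := Kp^{2j-n}$. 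The same inequality gives $P \mid K^*$, so I write $K^* = P K^{**}$; the assumption $p^{n-j} \ge (N-1)^{2/7}$ rearranges to $K^* \le P^{3/2}$, equivalently $K^{**} \le \sqrt P$.

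The case $c = 1$ is tailor-made for Lemma~\ref{lem33}. Here $\alpha + \beta = P$, so WLOG $\alpha \le P/2$, and the identity $\alpha\beta + 1 = K^*$ becomes $(P - \alpha)\alpha + 1 = P K^{**}$. Since $P > 11 + 4\sqrt 7$, Lemma~\ref{lem33} forces $\alpha \in \{1, \sqrt{P+1}\}$: any other value would give $K^{**} > \sqrt P$, contradicting $K^{**} \le \sqrt P$. The choice $\alpha = 1$ yields $N = p^{3(n-j)}+1$; the choice $\alpha = \sqrt{P+1}$ (after computing $K^* = (\alpha-1)P$ using $P = \alpha^2 - 1$) yields $N = (\sqrt{p^{n-j}+1}-1)\,p^{3(n-j)}+1$. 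Both are excluded by hypothesis.

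The main obstacle is the case $c \ge 2$, where Lemma~\ref{lem33} does not apply directly and many $(\alpha,\beta)$ pairs are a priori available. I plan to parametrize $\alpha = t$, $\beta = cP - t$ with $1 \le t \le cP/2$, so that $K^* = c(tP+1) - t^2$. The divisibility $P \mid K^*$ forces $t^2 \equiv c \pmod P$, while $c(P+1) - 1 \le K^* \le P^{3/2}$ implies $c < P$, pinning $c = t^2 - kP$ for the unique $k \ge 0$. The crude bound $\alpha \le \sqrt{\alpha\beta} \le \sqrt{K^*} \le P^{3/4}$ gives $t \le P^{3/4}$ and hence $k \le t^2/P \le \sqrt P$. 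The subcase $k = 0$ gives $c = t^2$, so $K^* = t^3 P$ and $N = t^3 \, p^{3(n-j)} + 1$, matching the third excluded form (with $H = t$).

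It remains to eliminate $k \ge 1$. A short computation yields $K^{**} = tc - k$, and $c = t^2 - kP \ge 2$ forces $t \ge \sqrt{kP}$, so $tc \ge 2t \ge 2\sqrt{kP}$. Combining with $tc = K^{**} + k \le \sqrt P + k$ gives $(2\sqrt k - 1)\sqrt P \le k$, and the elementary inequality $k/(2\sqrt k - 1) \le \sqrt k$, valid for all $k \ge 1$, then yields $P \le k$; combined with $k \le \sqrt P$, this forces $P \le 1$, contradicting $P > 11 + 4\sqrt 7$. Every branch either produces a contradiction or lands in an excluded family, so $N$ must be prime.
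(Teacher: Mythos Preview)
Your proof is correct. The overall framework matches the paper's: invoke Lemma~\ref{lem2} to get a prime factor $q\equiv 1\pmod{P}$, write the cofactor likewise, and analyze the resulting Diophantine constraints; the case $c=1$ is handled identically in both via Lemma~\ref{lem33}.

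Where you diverge is in the treatment of $c\ge 2$. The paper parametrizes by the residue of the \emph{larger} factor modulo $P$ (writing $a=AX+c_{\mathrm{paper}}$, $b=X-c_{\mathrm{paper}}$) and then splits into three subcases according to the sign of $A+1-c_{\mathrm{paper}}^2$: one is vacuous, one is eliminated by showing $ab>X^{3/2}$, and the third produces the cube family $N=H^3P^3+1$. You instead parametrize by the \emph{smaller} factor $t=\alpha$, extract the single congruence $t^2\equiv c\pmod P$, and reduce to the dichotomy $k=0$ versus $k\ge 1$. Your $k=0$ branch recovers the paper's cube case (with $H=t$), while the inequality chain $(2\sqrt k-1)\sqrt P\le k\le\sqrt P$ disposes of $k\ge 1$ in one stroke, implicitly absorbing the paper's two remaining subcases (which are in fact already ruled out once one knows $c<\sqrt P$ and $t\le P^{3/4}$). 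Your organization is somewhat more economical; the paper's has the minor advantage of making explicit which regime each excluded family comes from.
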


\begin{proof}
	As in \ref{thm31}, we can deduce that there must exist a prime divisor $q$ of $N$ such that $p^{n-j} \mid q-1$, therefore, we have two cases:
	\begin{enumerate}
		\item[(i)] $q=N$ and $N$ is prime, or
		\item[(ii)] $q < N$ and $N$ is composite.
	\end{enumerate}
	Now, we will try to exclude (ii) by contradiction. In(ii) we have $q \mid N$, hence, $\frac{N}{q}=M \in \mathbb{Z}$, but since $q=ap^{n-j}+1$ then we must have $M=bp^{n-j}+1$ as well, now, if we multiply $q$ by $M$ we will get $N=Kp^n+1=abp^{2(n-j)}+(a+b)p^{n-j}+1 \Longrightarrow Kp^{j}=abp^{n-j}+a+b$. Therefore we must have $p^{n-j} \mid a+b$ and $p^{n-j} \mid (ab+\frac{a+b}{p^{n-j}})$ which leads us to the following two cases (let $p^{n-j}=X$ for abbreviation):
	\begin{enumerate}
		\item[(1)] $a,b <X \Longrightarrow a+b=X \Longrightarrow X \mid (ab+1) \Longrightarrow \frac{ab+1}{X}=\frac{N-1}{X^3}$
		\item[(2)] $a=AX+c$, $b=X-c \Longrightarrow  X \mid (AX^2 - AcX + cX -c^2 + A + 1) \Longrightarrow AX - Ac + c + \frac{A + 1 - c^2}{X}=\frac{N-1}{X^3}$
	\end{enumerate}
	In (1) we have $\frac{ab+1}{X} \in \mathbb{Z}$, and by assuming that $a \le b \Longrightarrow \frac{(X-a)(a)+1}{X} \in \mathbb{Z}$, therefore and as in \ref{lem33} (since $X > 11+4\sqrt{7}$) we can deduce that $a=1 \Longrightarrow \frac{N-1}{X^3}=1$ or $a=\sqrt{X+1} \Longrightarrow \frac{N-1}{X^3}=\sqrt{X+1}-1$ or $a > \sqrt{X}+2 \Longrightarrow \frac{(X-a)(a)+1}{X} = \frac{N-1}{X^3} > \sqrt{X}$ and the latter is impossible since $X \ge (N-1)^{2/7}$, hence, we have excluded all the cases of (1) (since we assumed that $N \neq p^{3(n-j)}+1$ and $N \neq (\sqrt{p^{n-j}+1}-1)p^{3(n-j)}+1$).
	\\
	In (2) we must have $\frac{A + 1 - c^2}{X} \in \mathbb{Z}$ which leads us to the following three cases
	\begin{enumerate}
		\item[($\alpha$)] $A + 1 - c^2 \ge X$
	\end{enumerate}
	which is impossible since $A < \sqrt{X}$.
	\begin{enumerate}
		\item[($\beta$)] $A+1-c^2=0 \Longrightarrow c = \sqrt{A+1}$
	\end{enumerate}
	In this case we will have $ab=(AX+\sqrt{A+1})(X-\sqrt{A+1})=AX^2-A \sqrt{A+1} X + \sqrt{A+1} X -(A+1)= AX(X-\sqrt{A+1}) + \sqrt{A+1}X-(A+1)$ which is clearly larger than $(X^{3/2})$, since $\sqrt{A+1}X-(A+1) > 0$ and $AX(X-\sqrt{A+1}) > X^{3/2}$ (since $ X-\sqrt{A+1} > \sqrt{X}$), hence, we have excluded this case as well.
	
	\begin{enumerate}
		\item[($\gamma$)] $c^2-A-1 \ge X$
	\end{enumerate}
	In this case we should have $A+1=c^2 \pmod X = (X-c)^2 \pmod X$, but since $(X-c) < \sqrt{X}$ then $(X-c)^2 < X$ and we can deduce that $A=(X-c)^2-1 \Longrightarrow \frac{N-1}{X^3}= ((X-c)^2-1)(X-c) + c -\frac{c^2 -1 -((X-c)^2-1)}{X} = (X-c)^3 -X +2c - \frac{c^2 -1 -X^2 +2cX -c^2 +1}{X}= (X-c)^3$, and we have excluded that case by the assumption $N \neq H^3 p^{3(n-j)}+1$. 
	\\
	Hence, we have excluded all the cases of (ii), and then we can deduce that $q=N$ and $N$ is prime.
\end{proof}

As an example to illustrate Theorem \ref{thm34}, consider $N=14\cdot 3^{18}+1=5423886847$. Then according to theorems \ref{thmjose} and \ref{thm2} we can only have $j \leq 7$, however taking $1 \le j \le 7$ and $a=1481700844$ will not satisfy (i) in neither \ref{thmjose} nor \ref{thm2}. Also in \ref{thm31} we can only have $j \leq 11$, and taking $1 \le j \le 11$ with $a=1481700844$ will not satisfy (i) in \ref{thm31} as well. But in \ref{thm34} we can have $j \leq 12$, and by taking $j=12$, we obtain $1481700844^{14\cdot 3^{18-12-1}}\equiv 3256260648 \pmod N$ and the result follows from Theorem \ref{thm34}. Note that the condition $N=Kp^n+1 \neq (\sqrt{p^{n-j}+1}-1)p^{3(n-j)}+1$ in \ref{thm34} is not necessary (it is not necessary in the following corollary as well), since the only cases we can have $p^{n-j}+1=\square$ is when $p^{n-j}=2^3$ or $3$ whose are smaller than $11+4 \sqrt{7}$. 

\begin{corollary}\label{cor35}
	Let $N=2Kp^3+1$,  where $p$ is prime, $p>11+4 \sqrt{7}$, $2K < \log_{a}(2Kp^3+1)$, $2K < \sqrt{p}$, and $2K$ is not a perfect cube. If there exists an integer $a>1$ such that:
	\begin{equation}
		a^{N-1} \equiv 1 \pmod N.
	\end{equation}
	Then $N$ is prime.
\end{corollary}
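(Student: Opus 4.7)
The plan is to apply Theorem \ref{thm34} with $n=3$ and $j=2$, treating $2K$ as the coefficient ``$K$'' in the notation of that theorem. With this choice, $n-j=1$ and $Kp^{n-j-1} = 2K\cdot p^{0} = 2K$, so the parameters of Theorem \ref{thm34} become quite tractable in this setting.

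First I would verify the structural hypotheses of Theorem \ref{thm34}. The inequality $p^{j}\ge p^{2(n-j)}$ reads $p^{2}\ge p^{2}$, which is trivially true. The bound $p^{n-j}\ge (N-1)^{2/7}$ becomes $p \ge (2Kp^{3})^{2/7}$; but the hypothesis $2K<\sqrt{p}$ gives $2Kp^{3} < p^{7/2}$, whence $(2Kp^{3})^{2/7}<p$. The inequality $p^{n-j} > 11+4\sqrt{7}$ is exactly the assumption on $p$ itself. It remains to rule out the three exceptional shapes of $N$. First, $N = p^{3(n-j)}+1 = p^{3}+1$ would force $2K = 1$, impossible since $2K\ge 2$. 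Second, $N=(\sqrt{p+1}-1)p^{3}+1$ would require $p+1$ to be a perfect square, and for a prime $p$ this forces $p=3$, contradicting $p>11+4\sqrt{7}$ (as already noted in the remark following Theorem \ref{thm34}). Third, $N = H^{3}p^{3}+1$ would require $2K = H^{3}$, i.e. $2K$ to be a perfect cube, which is excluded by hypothesis.

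Next I would check the two congruence conditions (i) and (ii) of Theorem \ref{thm34}. With our choice of parameters, condition (i) becomes $a^{2K}\equiv L\not\equiv 1\pmod{N}$, and condition (ii) becomes $L^{p^{3}}\equiv 1\pmod{N}$. For (ii), simply compute $L^{p^{3}} = a^{2Kp^{3}} = a^{N-1}\equiv 1\pmod{N}$ by hypothesis. For (i), the assumption $2K<\log_{a}(2Kp^{3}+1) = \log_{a}N$ gives $1 < a^{2K} < N$, so $a^{2K}$ is its own reduced residue modulo $N$ and is strictly greater than $1$; in particular $a^{2K}\not\equiv 1\pmod N$. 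Theorem \ref{thm34} then concludes that $N$ is prime.

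There is no serious obstacle here: the corollary is essentially the $n=3$, $j=2$ specialization of Theorem \ref{thm34}, arranged so that both primality conditions collapse to a single Fermat congruence $a^{N-1}\equiv 1\pmod{N}$. The only mildly delicate point is tracking how the individual hypotheses of the corollary line up with the side conditions of Theorem \ref{thm34}: the bound $2K<\sqrt{p}$ secures the size condition $p\ge (N-1)^{2/7}$; the lower bound $p>11+4\sqrt{7}$ both satisfies the analogous condition in Theorem \ref{thm34} and rules out the ``$p+1$ a square'' exceptional form; and ``$2K$ not a cube'' rules out the $H^{3}p^{3}+1$ exceptional form, which are precisely the degenerate factorizations that the proof of Theorem \ref{thm34} had to exclude.
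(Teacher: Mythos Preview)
Your proposal is correct and follows exactly the same approach as the paper: specialize Theorem \ref{thm34} with $n=3$, $j=2$, then check that the size conditions and the excluded exceptional forms are all covered by the hypotheses of the corollary, and that the single Fermat congruence yields both (i) and (ii). Your write-up is in fact more explicit than the paper's (which leaves the exclusion of $N=p^{3}+1$ and $N=(\sqrt{p+1}-1)p^{3}+1$ to the reader and to the remark preceding the corollary), but the logic is identical.
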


\begin{proof}
	Since $2K < \sqrt{p}$ then taking $j=2$ will satisfy the conditions $p^j \ge p^{2(3-j)} $ and $p^{3-j} \ge (N-1)^{2/7}$, and since $2K < \log_{a}(2Kp^3+1)$ then we will have $a^{2Kp^{n-j-1}}=a^{2Kp^{3-2-1}}=a^{2K}  \equiv L \neq 1 \pmod N$, also we have $a^{N-1} \equiv 1 \pmod N$ and $2K$ is not a perfect cube. Hence all the conditions of Theorem \ref{thm34} are satisfied and $N$ is prime.
\end{proof}

\section{A generalization of Sophie Germain and Safe primes} 

Recall that a prime $p$ is called a \emph{SafePrime} if $\frac{p-1}{2}$ is also prime. A prime $q$ is called a \emph{SophieGermainPrime} if $2q+1$ is prime. Thus there is a one-to-one correspondence between SafePrimes and SophieGermainPrimes. SafePrimes are useful cryptographic parameters as they are resilient against certain attacks, see \cite{Crypto, Rib} for more details.
In this section we provide certain classes of primes which generalize those notions. Our motivation comes from the observation that the key feature of SafePrimes is that $p-1$ has a ``large" prime factor (namely $\frac{p-1}{2}$). We thus propose the following extension. 


\begin{definition}
	$a$-SafePrime is a prime number of the form $N=2Kp+1$, where $p$ is prime and $2K < \log_{a}(2Kp+1)$
\end{definition}

Our purpose of this notion is to have the factor $p$ of $N-1$ be ``much larger" than any other factor; in particular it is   ``much larger" than  $\frac{N-1}{p}$. In the classical definition of SafePrimes we simply have $K=1$ and that condition is clearly satisfied. Note that the primality of $a$-SafePrimes can be certified only by proving that  $a^{N-1} \equiv 1 \pmod N$ (according to \ref{cor24}).

Now, we will define $a$-SophieGermainPrimes as those primes $p$ for which $2Kp+1$ is also prime for any $K$ with $2K < \log_{a}(2Kp+1) \Longrightarrow 2K \leq \log_{a}(2Kp) \Longrightarrow \frac{a^{2K}}{2K} \leq p $. 

Questions regarding the infinitude and density of $a$-SophieGermainPrimes seem to be more or less of the same order of difficulty as the corresponding questions for the classical case, where one mostly relies on conjectures and heuristics  (see \cite{Chris, Korevaar} for instance), but the distribution of $a$-SophieGermainPrimes will be addressed in the following section, and we will give an important conjecture about the distribution of $2$-SophieGermainPrimes (and $a$-SophieGermainPrimes in general). As for the density of  $a$-SafePrimes we recall Hardy and Littlewood's Conjecture B \cite{HarLit} which states that for $K\geq 1$,  the number of the prime pairs $(p, p+2K)$, where $p \leq x$, is asymptotically given by

\begin{equation}
	\label{asymp1}
	2C_K \frac{x}{\log^2 (x)} \sim 2C_K \int_{2}^{x} \frac{dt}{\log^2 t},
\end{equation}
\\
where

\[
\displaystyle{C_K=c_{2}\prod_{2<q, q\mid 2K}^{}\frac{q-1}{q-2}}.
\]

and $c_2$ (the so-called \emph{twin prime constant}) is given by

\[
c_2=\prod_{p\geq 3} \frac{p(p-2)}{(p-1)^2}\approx 0.66016181.
\]

The exact same asymptotics can be used to estimate  the number of the primes of the form $2Kp+1$, where $p$ is prime and $p \leq x$. Furthermore (as discussed in \cite{Chris} for instance), the following function, which is asymptotic to the right hand side of \eqref{asymp1} gives a more accurate estimate, especially for smaller values of $x$

\begin{equation}
	\label{asymp2}        
	2C_K \int_{2}^{x} \frac{dt}{\log t \log 2Kt}.
\end{equation}
\\
If we assume that the number of primes of the form $2Kp+1$ with $K$ ranging between $1$ and $n$ and $p\leq x$ is simply asymptotic to the sum of the individual estimates (as given in \eqref{asymp1} or \eqref{asymp2}) then we get the estimates:

\begin{equation}
	\label{asympSum1}
	2\frac{x}{\log^2(x)}(C_1+C_{2} +\dots+ C_{n})
\end{equation}

For which prime $p$ in the range  $\frac{a^{2n}}{2n} \leq p < \frac{a^{2(n+1)}}{2(n+1)} $ $K$ can take values up to $n$ with the condition $\frac{a^{2K}}{2K} \leq p$ satisfied. For typographical convenience we set $f(a,n):=\frac{a^{2n}}{2n}$. Utilizing \eqref{asympSum1} the asymptotic behavior for the number of $a$-SafePrimes for which $f(a,n) \leq p < f(a,n+1)$ is:

\[
\alpha_1(a,n):=2c_2\left(\frac{f(a,{n+1})}{\log^2 (f(a,n+1)) }- \frac{f(a,n)}{\log^2 (f(a,n)) } \right) \sum_{K=1}^{n} \prod_{2<q,q\mid 2K} \frac{q-1}{q-2} .
\]

Similarly, we can use the integral estimate in \eqref{asymp2} to obtain the estimate:

\[ 
\displaystyle{ \alpha_2(a,n) :=2c_2 \sum_{K=1}^{n} \prod_{2<q, q \mid 2K} \frac{q-1}{q-2} \int_{f(a,n)}^{f(a,n+1)} \frac{dt}{\log t \log 2Kt}}. 
\]

Table \ref{gsp} lists  samples of the actual count of $2$-SafePrimes for which $p$ is in the interval $[f(2,n), f(2,n+1)]$ (labeled $2$-SP for brevity), as well as the corresponding estimates $\alpha_1(2,n), \alpha_2(2,n)$ for $n$ up to $14$. We also list the number of $2$-SophieGermainPrimes in those intervals (labeled $2$-SGP).  
We can not say that there is a one-to-one correspondence between $2$-SP and $2$-SGP due to the simple fact that the same $2$-SGP might give rise to more than one $2$-SP for various values of $K$ (the same can be said as for correspondence between $a$-SP and $a$-SGP). For example when $n=3$ in the table below, the interval $[f(2,3), f(2,4)]=[2^6/6, 32]$ contains exactly $7$ primes; namely $11$ through $31$. Two of them ($19$ and $31$) are not $2$-SGP, but three others give rise to two $2$-SP each (namely $11$, $13$ and $23$).

\begin{table}[h]
	\centering
	
	\begin{tabularx}{\linewidth}{|c|X|X|X|X|}
		\hline
		$n$  & $2$-SGP in  $[f(2,n), f(2,n+1)]$ & $2$-SP $2Kp+1$, $p$ in $[f(2,n), f(2,n+1)]$ & $\alpha_1(2,n)$ & $\alpha_2(2,n)$ \\
		\hline
		1 & 2 & 2 & $-2.74$ & 1.5 \\
		\hline 
		2 & 2 & 2 & $-0.469$ & 3 \\
		\hline
		3 & 5 & 8 & 4 & 9 \\
		\hline
		4 & 14 & 18 & 14 & 20 \\
		\hline
		5 & 36 & 52 & 44 & 54 \\
		\hline 
		6 & 104 & 168 & 148 & 165 \\ 
		\hline
		7 & 295 & 463 & 450 & 483 \\
		\hline
		8 & 895 & 1414 & 1380 & 1447 \\
		\hline
		9 & 2970 & 4854 & 4724 & 4836 \\
		\hline
		10 & 9496 & 15783 & 15484 & 15672 \\
		\hline
		11 & 30788 & 50832 & 50827 & 51030 \\
		\hline
		12 & 104997 & 177808 & 178920 & 178090 \\
		\hline
		13 & 353357 & 596973 & 602484 & 597141 \\
		\hline
		14 & 1211233 & 2041459 & 2066125 & 2040547 \\
		\hline
	\end{tabularx}
	\caption{Some counts and asymptotics of $2$-SafePrimes} \label{gsp}       
\end{table}

\section{The probability that a given prime is $a$-SGP}

From the definition of $a$-SGP in the previous section, it is obvious that for which prime $f(a,n) \leq p \leq f(a,n+1)$, there is a set of numbers of the form $2Kp+1$, where $1 \leq K \leq n$, that if there is at least one prime of them, then we call $p$ an $a$-SGP, that set of numbers will be denoted by $S_p(n)$, namely, $S_p(n)=\{2p+1, 4p+1, 6p+1, ...... , 2np+1\}$. \\  
Now, we can't declare that the elements of $S_p(n)$ are independent, since if $2p+1$ is divisible by $3$ (for example), then all elements in $\{8p+1, 14p+1, 20p+1....\}$ will be divisible by $3$ as well, namely, we can't declare an element of $S_p(n)$ is independent of the remained elements of $S_p(n)$ unless it is not divisible by any prime less than $n$. Therefore, to get an approximation to the number of independent elements in $S_p(n)$ we should get an approximation to the number of elements in $S_p(n)$ which are coprime to all primes less than $n$ (except for 2, since $S_p(n)$ elements are coprime to 2 already), but since for any $N \in S_p(n)$ the probability that $(N,q)=1$ (such that $3 \le q$, and $q$ is prime) is $\frac{q-1}{q}$, hence, the probability that $(N,q)=1$ for all primes $3 \le q \le n$ is $\displaystyle{\prod_{3\leq q \leq n}^{} \frac{q-1}{q}}$, therefore, the number of elements in $S_p(n)$ which are coprime to all primes less than $n$ (it is also the number of independent elements in $S_p(n)$) is approximately     

\[
\lvert S_p(n) \rvert \prod_{3\leq q \leq n}^{} \frac{q-1}{q} = n \prod_{3\leq q \leq n}^{} \frac{q-1}{q} \approx n \frac{4e^{-\gamma}}{\log n^2}. 
\]

That set of independent numbers will be denoted by $ind$-$S_p(n)$. Now assume that $N=2Kp+1 \in ind$-$S_p(n)$, then the probability that $N$ is prime (since $N$ is in the interval $[2f(a,n) , 2nf(a,n+1)]$ and not divisible by any prime less than $n$) is about:

\[
\frac{\int_{2f(a,n)}^{2nf(a,n+1)} \frac{dt}{\log t}}{ (2nf(a,n+1)-2f(a,n)) \displaystyle{\prod_{2\leq q \leq n}} \frac{q-1}{q}}
\]

hence, the probability that $N$ is composite is about:

\[
1-\frac{\int_{2f(a,n)}^{2nf(a,n+1)} \frac{dt}{\log t}}{\frac{2e^{-\gamma}}{\log n^2} (2nf(a,n+1)-2f(a,n))}
\]

thus, the probability that all the elements of $ind$-$S_p(n)$ are composite is about:

\[
{\left(1-\frac{\int_{2f(a,n)}^{2nf(a,n+1)} \frac{dt}{\log t}}{\frac{2e^{-\gamma}}{\log n^2} (2nf(a,n+1)-2f(a,n))}\right)}^{ n \frac{4e^{-\gamma}}{\log n^2}}
\]

therefore, the probability that not all the elements of $ind$-$S_p(n)$ are composite is about:

\[
\alpha_3(a,n):=1-{\left(1-\frac{\int_{2f(a,n)}^{2nf(a,n+1)} \frac{dt}{\log t}}{\frac{2e^{-\gamma}}{\log n^2} (2nf(a,n+1)-2f(a,n))}\right)}^{ n \frac{4e^{-\gamma}}{\log n^2}}
\]

consequently, $\alpha_3(a,n)$ finds the probability that a given prime $f(a,n) \leq p \leq f(a,n+1)$ is $a$-SGP. 
In correspondence to $\alpha_3(a,n)$ we obtained a similar estimate by assuming that the elements of $S_p(n)$ are independent (in fact they are not), and surprisingly, this obtained estimate behaved similarly to $\alpha_3(a,n)$: 

\[
\alpha_4(a,n):=1-{\left(1-\frac{\int_{2f(a,n)}^{2nf(a,n+1)} \frac{dt}{\log t}}{\frac{1}{2} (2nf(a,n+1)-2f(a,n))}\right)}^{n}.
\]

But we should acknowledge that both of $\alpha_3(a,n)$ and $\alpha_4(a,n)$ failed to expect the number of $a$-SGP for large values of $a$ with small $n$, which compelled us to obtain the following estimate which depends on Hardy and Littlewood's Conjecture:

\[
\alpha_5(a,n):=1- \displaystyle{\prod_{K=1}^{n}} \left(1-\frac{2C_K\int_{f(a,n)}^{f(a,n+1)} \frac{dt}{\log t \log 2Kt}}{\int_{f(a,n)}^{f(a,n+1)} \frac{dt}{\log t}}\right)
\]

note that if we multiply either $\alpha_3(a,n)$, $\alpha_4(a,n)$ or $\alpha_5(a,n)$  by the expected number of primes in the interval $[f(a,n) , f(a,n+1)]$ (namely, $\int_{f(a,n)}^{f(a,n+1)} \frac{dt}{\log t}$), then we will get an approximation to the number of $a$-SGP primes in the interval $[f(a,n) , f(a,n+1)]$.
\\
Table \ref{estimates1} lists some counts and asymptotics of $2$-SGP primes in many different intervals which shows that the accuracy of either $\alpha_3(2,n)$, $\alpha_4(2,n)$ or $\alpha_5(2,n)$ is very acceptable.\\
But we should acknowledge that in some similar problems the data agrees well with heuristics only when the numbers are small, therefore we tried to test the accuracy of $\alpha_3(2,n)$, $\alpha_4(2,n)$ and $\alpha_5(2,n)$ when dealing with larger numbers, and surprisingly, their accuracy were very acceptable even if we are dealing with $100$, $1000$ or $10000$ digits numbers, check table \ref{estimates2}.

\begin{table}[!h]
	\centering
	\begin{tabularx}{\linewidth}{|c|X|X|X|X|X|}
		\hline
		n & $\pi(f(2,n+1))-\pi(f(2,n))$ & $2$-SGP in the interval $[f(2,n), f(2,n+1)]$ & $\alpha_3(2,n)$ $\int_{f(2,n)}^{f(2,n+1)} \frac{dt}{\log t}$ & $\alpha_4(2,n)$ $\int_{f(2,n)}^{f(2,n+1)} \frac{dt}{\log t}$ & $\alpha_5(2,n)$ $\int_{f(2,n)}^{f(2,n+1)} \frac{dt}{\log t}$ \\
		\hline
		1 & 2 & 2 & -- & 2.2 & 2.5 \\
		\hline
		2 & 2 & 2 & 2.8 & 3 & 2.9 \\
		\hline
		3 & 7 & 5 & 5.9 & 5.9 & 6.3 \\
		\hline
		4 & 15 & 14 & 14.1 & 13.8 & 13.9 \\
		\hline
		5 & 42 & 36 & 37 & 36 & 36 \\
		\hline
		6 & 124 & 104 & 104 & 100 & 102 \\
		\hline
		7 & 372 & 295 & 307 & 296 & 297 \\
		\hline
		8 & 1144 & 895 & 941 & 906 & 892 \\
		\hline
		9 & 3647 & 2970 & 2974 & 2864 & 2874 \\
		\hline
		10 & 11861 & 9496 & 9621 & 9272 & 9284 \\
		\hline
		11 & 39258 & 30788 & 31741 & 30617 & 30197 \\
		\hline
		12 & 132119 & 104997 & 106446 & 102770 & 102823 \\
		\hline
		13 & 450453 & 353357 & 361946 & 349778 & 346471 \\
		\hline
		14 & 1553274 & 1211233 & 1245390 & 1204660 & 1185604 \\
		\hline
		15 & 5411233 & 4304679 & 4329368 & 4191628 & 4233170 \\
		\hline
		16 & 19015798 & 14953724 & 15185847 & 14715731 & 14712906 \\
		\hline 
		17 & 67343702 & 52508562 & 53689079 & 52071171 & 51643313 \\
		\hline
		18 & 240139092 & 188600098 & 191152371 & 185542307 & 185655903 \\
		\hline
		19 & 861585192 & 671209186 & 684850472 & 665264341 & 660813007 \\
		\hline
	\end{tabularx}
	\caption{Some counts and asymptotics of $2$-SGP}
	\label{estimates1}
\end{table} 

\begin{table}[!h]
	
	\begin{tabularx}{\textwidth}{|c|X|X|c|c|c|}
		\hline
		$n$ & $L$ consecutive primes in $[f(2,n),f(2,n+1)]$ & $2$-SGP out of $L$ consecutive primes in $[f(2,n),f(2,n+1)]$ & $L \alpha_3(2,n)$ & $L \alpha_4(2,n)$ & $L \alpha_5(2,n)$ \\
		\hline
		$169$ & $100000$ & 76832 & 76986 & 76460 & 76346 \\
		\hline
		$1666$ & $10000$ & 7669 & 7646 & 7638 & 7636 \\
		\hline
		$16617$ & $100$ & 77 & 76 & 76 & 76 \\
		\hline    
	\end{tabularx}
	\caption{Some counts and asymptotics of $2$-SGP out of some $100$, $1000$, and $10000$ digits consecutive primes}
	\label{estimates2}
\end{table}

The computations in tables \ref{estimates1} and \ref{estimates2} show that the ratio of $2$-SGP to primes slowly converges to $0.76$, also our computations of $\alpha_3(2,n)$, $\alpha_4(2,n)$ and $\alpha_5(2,n)$ for some large values of $n$ show that they converge to $0.76$ which means that the probability that a given prime $p$ is $2$-SGP converges to $0.76$ as well (as $p \to \infty$). Therefore, according to our computations of $\alpha_3(2,n)$, $\alpha_4(2,n)$  and $\alpha_5(2,n)$ and supported by the computations in tables \ref{estimates1} and \ref{estimates2}, we believe that it is reasonable to give the following conjecture 

\begin{conjecture}\label{con51}
	The probability that a given prime $p$ is $2$-SGP converges to $0.76$ as $p \to \infty$.
\end{conjecture}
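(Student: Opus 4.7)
The plan is to reduce the conjecture to the Hardy--Littlewood prime pairs conjecture (Conjecture B, already used to justify $\alpha_5$), combined with the mean-value behavior of the singular series $C_K$, and then to recognize $0.76$ as a numerical approximation to the explicit constant $1-e^{-1/\log 2}\approx 0.7636$.

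First, I would fix a prime $p$ with $f(2,n)\le p<f(2,n+1)$ (so $\log p\sim 2n\log 2$) and, for each $1\le K\le n$, apply Conjecture B to estimate
\[
\Pr\bigl(2Kp+1 \text{ is prime}\bigr)\sim \frac{2C_K}{\log(2Kp+1)}.
\]
Treating the events ``$2Kp+1$ is prime'' as asymptotically independent across $K$ (the heuristic underlying $\alpha_5(2,n)$), one obtains
\[
\Pr\bigl(p\text{ is not a }2\text{-SGP}\bigr) \sim \prod_{K=1}^{n}\left(1-\frac{2C_K}{\log(2Kp+1)}\right).
\]
Taking logarithms and using $\log(2Kp+1)=\log p+O(\log n)=(2n\log 2)(1+o(1))$ uniformly in $K\le n$, the sum reduces to
\[
\sum_{K=1}^{n}\frac{2C_K}{\log(2Kp+1)}=\frac{1}{n\log 2}\sum_{K=1}^{n}C_K\,(1+o(1)),
\]
up to a second-order term of size $O\bigl(\sum C_K^2/\log^2 p\bigr)=O(1/n)$ that I would bound using the classical estimate $\sum_{K\le n}C_K^2=O(n)$.

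Second, I would invoke the mean-value identity
\[
\lim_{N\to\infty}\frac{1}{N}\sum_{K=1}^{N}C_K=1,
\]
which is a standard consequence of writing the multiplicative function $C_K/c_2=\prod_{q\mid K,\,q>2}\frac{q-1}{q-2}$ as a Dirichlet convolution $C_K/c_2=\sum_{d\mid K}f(d)$ and evaluating the resulting Euler product at $s=1$; the ``extra'' Euler factors at odd $q$ combine with the product defining $c_2$ to give $1$. Combining this with the previous paragraph yields
\[
\Pr\bigl(p\text{ is }2\text{-SGP}\bigr)\longrightarrow 1-\exp\!\left(-\frac{1}{\log 2}\right)\approx 0.7636,
\]
which is precisely the $0.76$ predicted by the conjecture and observed in Tables \ref{estimates1} and \ref{estimates2}.

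The main obstacle is of course that each of the ingredients is itself unproven or heuristic: Conjecture B is open, and the step treating the events ``$2Kp+1$ prime'' as independent across $K$ has no rigorous justification (it fails term by term, as correlations from small prime obstructions are exactly what $C_K$ is designed to encode). A secondary technical difficulty is obtaining the convergence in the mean value of $C_K$ with a uniform error that survives the multiplication by $1/(n\log 2)$, since one needs the $o(1)$ to be genuinely uniform over $K\le n$; this should follow from a Selberg--Wintner-type estimate on partial sums of $C_K$. Assuming all of these ingredients can be granted, the target value $0.76$ is not a numerical coincidence but the decimal approximation to the closed-form constant $1-e^{-1/\log 2}$, and one could even strengthen the conjecture to this explicit limit.
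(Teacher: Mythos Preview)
The statement is labeled a \emph{conjecture} in the paper, and the paper offers no proof: its entire support is the numerical evidence in Tables~\ref{estimates1} and~\ref{estimates2} together with the empirical observation that the heuristics $\alpha_3(2,n)$, $\alpha_4(2,n)$, $\alpha_5(2,n)$ appear numerically to stabilize near $0.76$. There is therefore nothing on the paper's side to compare your argument against at the level of proof.

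That said, your proposal goes strictly further than the paper, and in a useful direction. The paper never identifies a closed form for the limiting probability; you do, and your derivation is internally coherent as a heuristic. The key identity you invoke, $\frac{1}{N}\sum_{K\le N}C_K\to 1$, is indeed a known (unconditional) fact about the Hardy--Littlewood singular series, and combined with $\log p\sim 2n\log 2$ it cleanly collapses $\alpha_5(2,n)$ to $1-e^{-1/\log 2}\approx 0.7636$. This is a genuine strengthening of the paper's formulation: one could replace the empirical ``$0.76$'' by the explicit constant $1-e^{-1/\log 2}$, and the same computation shows more generally that the limit in Conjecture~\ref{con52} should be $1-e^{-1/\log a}$.

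You are also candid about the gaps, and they are real: Hardy--Littlewood Conjecture~B is open, and the independence assumption underlying $\alpha_5$ is exactly the kind of statement that is expected but far out of reach. So your write-up is best described not as a proof but as a sharper heuristic than the paper's, pinning down the conjectural limit in closed form. If you want to present it as a proof, you would have to state explicitly that it is conditional on Conjecture~B together with an asymptotic-independence hypothesis for the events $\{2Kp+1\text{ prime}\}_{K\le n}$; absent those, the statement remains a conjecture, consistent with how the paper presents it.
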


We also computed $\alpha_3(a,n)$, $\alpha_4(a,n)$ and $\alpha_5(a,n)$ for some large values of $n$ with many values of $a$ (other than $a=2$), and according to these computations and supported by the computations in table \ref{estimates3} we give a general conjecture about the probability that a given prime $p$ is $a$-SGP

\begin{conjecture}\label{con52}
	The probability that a given prime $p$ is $a$-SGP converges to $\displaystyle{\lim_{n \to \infty} \alpha_3(a,n) \approx }$ $ \displaystyle{\lim_{n \to \infty} \alpha_4(a,n) } \approx \lim_{n \to \infty} \alpha_5(a,n) $ as $p \to \infty$.
\end{conjecture}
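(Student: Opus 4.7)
The plan is to attack the conjecture conditionally on the Hardy--Littlewood prime $k$-tuples conjecture, since this is the engine already used to derive $\alpha_5$ and it appears unavoidable for any rigorous statement about the density of $a$-SGP primes. For a prime $p$ in $[f(a,n), f(a,n+1)]$, the event ``$p$ is $a$-SGP'' is exactly $\bigcup_{K=1}^n E_K$ where $E_K = \{2Kp+1 \text{ is prime}\}$. The first step is to express the indicator of this union via inclusion--exclusion and average over $p$ in the interval, using Hardy--Littlewood to convert each correlation sum $\sum_{p\in[f(a,n),f(a,n+1)]} \mathbf{1}[E_{K_{i_1}} \cap \cdots \cap E_{K_{i_r}}]$ into an asymptotic involving the singular series of the tuple $(p,\,2K_{i_1}p+1,\ldots,2K_{i_r}p+1)$.

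Once Hardy--Littlewood is granted, one can replace these correlation sums by a product over the individual events up to a controllable error, provided the local factors of the singular series behave near-multiplicatively across disjoint prime support. This yields
\[
\Pr(p \text{ is } a\text{-SGP}) \;\sim\; 1 - \prod_{K=1}^n \left(1 - \frac{2C_K}{\log f(a,n)\,\log(2K f(a,n))}\right),
\]
which matches $\alpha_5(a,n)$ to leading order after the integral averages that define $\alpha_5$ are inserted. Convergence of this quantity as $n\to\infty$ (equivalently $p\to\infty$) then follows by showing that the inner sum $\sum_{K=1}^\infty \frac{2C_K}{\log f(a,n)\,\log(2Kf(a,n))}$ converges, which one can control using Mertens-type bounds on $C_K = c_2 \prod_{2<q\mid 2K}\frac{q-1}{q-2}$.

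The second half of the conjecture --- that $\alpha_3$ and $\alpha_4$ share the same limit as $\alpha_5$ --- is a purely analytic comparison. By Mertens' theorem, $\prod_{3\le q\le n}\tfrac{q-1}{q}\sim 2e^{-\gamma}/\log n$ and $\prod_{2\le q\le n}\tfrac{q-1}{q}\sim e^{-\gamma}/\log n$, so the exponent $n\cdot 4e^{-\gamma}/\log^2 n$ in $\alpha_3$ and the density factor $2e^{-\gamma}/\log^2 n$ inside its base conspire to produce the same leading asymptotic as the $n$-fold product in $\alpha_4$; on the $\alpha_5$ side the average of $C_K$ over $K\le n$ tends to $c_2$, and a Taylor expansion $\log(1-x)=-x+O(x^2)$ inside each product collapses all three into a common integral $\exp\bigl(-\int \cdots\bigr)$ whose limit is $1-c_*$ for an explicit constant $c_*$ depending on $a$.

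The hard part is of course that the Hardy--Littlewood conjecture is itself open, and even conditionally the argument needs uniformity in $n$ so that the inclusion--exclusion can be truncated without the correlation terms swamping the main term. A secondary obstacle is that the three heuristics do not use the same probabilistic model --- $\alpha_3$ uses an independent-sieve model restricted by a Mertens-style density, $\alpha_4$ naively treats the elements of $S_p(n)$ as independent with density $1/2$, and $\alpha_5$ uses Hardy--Littlewood --- so the statement ``they are approximately equal'' is itself heuristic and would need to be upgraded to a precise asymptotic equivalence (ideally with an explicit error term) before any fully rigorous statement could be formulated, let alone proved.
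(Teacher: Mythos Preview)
The statement is labelled a \emph{conjecture} in the paper, and the paper offers no proof---only the numerical evidence of Tables~\ref{estimates1}, \ref{estimates2}, and \ref{estimates3}. So there is no argument in the paper to compare your proposal against. What you have written is not a proof either but a conditional outline, and it is worth flagging where that outline breaks even if Hardy--Littlewood is granted.

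The essential gap is the passage from the inclusion--exclusion expansion of $\Pr\bigl(\bigcup_K E_K\bigr)$ to the product $\prod_{K=1}^n(1-\Pr(E_K))$ that defines $\alpha_5$. You justify this by saying the local factors of the singular series ``behave near-multiplicatively across disjoint prime support,'' but the supports are \emph{not} disjoint: the forms $2K_ip+1$ interact at every small prime. For instance the tuple $(p,\,2p+1,\,4p+1)$ is inadmissible modulo $3$, so its Hardy--Littlewood density is zero, while the product of the two pairwise densities is positive. This is precisely the dependence that $\alpha_3$ attempts to model with its sieve exponent and that $\alpha_4$ and $\alpha_5$ simply ignore. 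A genuine conditional argument would have to control the full $r$-tuple singular series for every subset and show that after summing the alternating inclusion--exclusion the dependent and independent answers agree in the limit; that is a nontrivial cancellation statement, not a consequence of near-multiplicativity.

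A smaller slip: in your convergence claim you write the summand as $2C_K/\bigl(\log f(a,n)\,\log(2Kf(a,n))\bigr)$, but the ratio inside $\alpha_5$ already cancels one factor of $\log t$, so each term is $\sim C_K/(n\log a)$ and the sum over $K\le n$ tends to a \emph{nonzero} constant (roughly $1/\log a$ times the mean value of $C_K$). That nonzero limit is what produces the value near $0.76$ for $a=2$; with your extra factor of $\log f(a,n)$ the sum is $O(1/n)$ and would force $\alpha_5\to 0$.

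Your final paragraph effectively concedes both points. Since the paper itself presents the statement as a conjecture backed by computation, the honest description of your write-up is ``heuristic motivation for why three different probabilistic models might share a limit,'' not a proof sketch.
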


\begin{table}[!h]
	
	\begin{tabularx}{\textwidth}{|c|X|X|c|c|c|}
		\hline
		$(a,n)$ & $L$ consecutive primes in $[f(a,n),f(a,n+1)]$ & $a$-SGP out of $L$ consecutive primes in $[f(a,n),f(a,n+1)]$ & $L \alpha_3(a,n)$ & $L \alpha_4(a,n)$ & $L \alpha_5(a,n)$ \\
		\hline
		$(2,169)$ & $100000$ & 76832 & 76986 & 76460 & 76346 \\
		\hline
		$(3,106)$ & $100000$ & 59943 & 60230 & 59727 & 59555 \\
		\hline
		$(4,84)$ & $100000$ & 51493 & 51724 & 51276 & 50998 \\
		\hline
		$(5,72)$ & $100000$ & 46186 & 46512 & 46104 & 45798 \\
		\hline
		$(6,65)$ & $100000$ & 42743 & 42932 & 42557 & 42077 \\
		\hline
		$(7,59)$ & $100000$ & 39449 & 40288 & 39934 & 39239 \\
		\hline 
		$(8,56)$ & $100000$ & 37924 & 38236 & 37906 & 37316 \\
		\hline 
		$(9,53)$ & $100000$ & 36224 & 36585 & 36271 & 35606 \\
		\hline
		$(10,50)$ & $100000$ & 34702 & 35217 & 34915 & 34332 \\
		\hline       
	\end{tabularx}
	\caption{Some counts and asymptotics of $a$-SGP out of $10^5$ $100$ digits consecutive primes}
	\label{estimates3}
\end{table}

To show the utility of \ref{con51} we considered the question of which of the Mersenne primes (i.e. those of the form $2^p-1$ where $p$ itself is prime) are also $2$-SGP. Either $\alpha_3(2,n)$, $\alpha_4(2,n)$ or $\alpha_5(2,n)$ (or according to \ref{con51} ) can tell us that there are about 23 $2$-SGP out of the first 30 Mersenne primes, if that was true then we will be able to generate 23 $2$-SP at least. But, out of the first 30 Mersenne primes there are exactly 21 $2$-SGP, which shows again that the accuracy of either $\alpha_3(2,n)$, $\alpha_4(2,n)$ or $\alpha_5(2,n)$ (or \ref{con51}) is acceptable (even if we are dealing with completely random numbers such as Mersenne primes) and we can use them as a method to search for huge $2$-SP or $a$-SP in general. The following table lists all of the 21 Mersenne primes which are also $2$-SGP as well as the corresponding values of $2K$ for each one (for some of them, more than one value of $K$ works). 

\begin{table}[!h]
	\centering
	\begin{tabular}{|l|l|l|l|}
		\hline
		$N=2K \times (2^p-1)+1$ & \textrm{digits} & $N=2K \times (2^p-1)+1$ & \textrm{digits} \\
		\hline
		$2 \times (2^{2}-1)+1$ & 1  & $2010 \times (2^{4253}-1)+1$ & 1284 \\
		\hline
		$4 \times (2^{3}-1)+1$ & 2  & $3708 \times (2^{4253}-1)+1$ & 1284 \\
		\hline
		$4 \times (2^{7}-1)+1$ & 3  & $1746 \times (2^{9689}-1)+1$ & 2920 \\
		\hline
		$12 \times (2^{17}-1)+1$ & 7  & $3426 \times (2^{9941}-1)+1$ & 2997 \\
		\hline
		$16 \times (2^{19}-1)+1$ & 7 & $3696 \times (2^{9941}-1)+1$ & 2997 \\
		\hline
		$52 \times (2^{61}-1)+1$ & 21  & $6238 \times (2^{11213}-1)+1$ & 3380 \\
		\hline
		$66 \times (2^{61}-1)+1$ & 21  & $9048 \times (2^{11213}-1)+1$ & 3380 \\
		\hline
		$114 \times (2^{127}-1)+1$ & 41  & $858 \times (2^{21701}-1)+1$ & 6536 \\
		\hline
		$124 \times (2^{127}-1)+1$ & 41  & $14712 \times (2^{21701}-1)+1$ & 6537 \\
		\hline
		$336 \times (2^{521}-1)+1$ & 160  & $4018 \times (2^{23209}-1)+1$ & 6991 \\
		\hline
		$154 \times (2^{607}-1)+1$ & 185  & $20808 \times (2^{23209}-1)+1$ & 6991 \\
		\hline
		$550 \times (2^{607}-1)+1$ & 186  & $17262 \times (2^{44497}-1)+1$ & 13400 \\
		\hline
		$156 \times (2^{2203}-1)+1$ & 666  & $15418 \times (2^{86243}-1)+1$ & 25966 \\
		\hline
		$546 \times (2^{2203}-1)+1$ & 666  & $42844 \times (2^{86243}-1)+1$ & 25967 \\
		\hline
		$1110 \times (2^{2203}-1)+1$ & 667  & $58818 \times (2^{86243}-1)+1$ & 25967 \\
		\hline
		$1144 \times (2^{2203}-1)+1$ & 667  & 6526 $\times (2^{132049}-1)+1$ & 39755 \\
		\hline
		$1086 \times (2^{2281}-1)+1$ & 690  & 30690 $\times (2^{132049}-1)+1$ & 39756 \\
		\hline
		$1656 \times (2^{2281}-1)+1$ & 690 & 47142 $\times (2^{132049}-1)+1$ & 39756 \\
		\hline
		$1816 \times (2^{3217}-1)+1$ & 972 & 110086 $\times (2^{132049}-1)+1$  & 39756 \\
		\hline
	\end{tabular}
	\caption{Mersenne primes which are also $2$-SophieGermainPrimes, and the corresponding values of $K$} \label{mers}
\end{table}

We also considered the question of what is the probability that the largest known prime $p=2^{82589933}-1$ is $2$-SGP, that probability can be found by computing either $\alpha_3(2,n)$, $\alpha_4(2,n)$ or $\alpha_5(2,n)$ for $n=41294979$, hence, the probability that $p=2^{82589933}-1$ is $2$-SGP is about $\alpha_3(2,41294979) \approx \alpha_4(2,41294979) \approx \alpha_5(2,41294979) \approx 0.7637$, which means that the probability to find a prime number of the form $N=2K(2^{82589933}-1)+1$, where $1 \leq K \leq 41294979$ is about $0.7637$ .

\section*{Acknowledgment}
I am grateful to my supervisors Prof. Ahmed El-Guindy of Cairo University and Prof. Hatem M.Bahig of Ain Shams University for all their help and guidance that they have given me. Also I would like to thank Prof. Wafik Lotfallah of the American University in Cairo and Ms. Hagar Gamal of Cairo University for several useful discussions.

\end{document}